\newtheorem{thm}{Theorem}[section]
\newtheorem{lem}[thm]{Lemma}
\theoremstyle{definition}
\newtheorem{rem}[thm]{Remark}
\newtheorem{ex}[thm]{Example}
\def\R{\mathbb{R}}
\def\P{\mathbb P}
\def\E{\mathbb E}
\journal{.}
\begin{document}
\selectlanguage{english}
\begin{frontmatter}

\title{Dynamic characterization of a stochastic SIR  infectious disease model with dual perturbation}
\author{Driss Kiouach\footnote{Corresponding author.\\
E-mail addresses: \href{d.kiouach@uiz.ac.ma}{d.kiouach@uiz.ac.ma} (D. Kiouach), \href{yassine.sabbar@edu.uiz.ac.ma}{yassine.sabbar@edu.uiz.ac.ma} (Y. Sabbar)} and  Yassine Sabbar }
\address{LPAIS Laboratory, Faculty of Sciences Dhar El Mahraz, Sidi Mohamed Ben Abdellah University, Fez, Morocco.}  
\vspace*{1cm}

\begin{abstract}
This paper presents an SIR epidemic model with two different types of perturbations: white and Lévy noises. We consecrate to develop a mathematical method to obtain the asymptotic properties of the perturbed model. We use the comparison theorem, mutually exclusive possibilities lemma, and some new techniques of the stochastic differential systems to discuss the following characteristics: persistence in the mean, ergodicity, and extinction of the disease. Finally, numerical simulations about different perturbations are realized to confirm the obtained theoretical results. \vskip 2mm

\textbf{Keywords:} Asymptotic behaviour;  SIR epidemic model; white noise; Lévy noise; stationary distribution; persistence, extinction.

\textbf{Mathematics Subject Classification:} 92B05; 93E03; 93E15.
\end{abstract}
\end{frontmatter}
\
\nocite{2009ProcDETAp}
\section{Introduction}
The mathematical models are largely used in order to describe and control the dissemination of diseases into a population \cite{1}. It will continue to be one of the vigorous themes in mathematical biology due to its significance \cite{2}. The SIR epidemic model with mass action rate is a standard model among many mathematical models that present the first tentative to understand the transmission mechanisms of infectious epidemics \cite{3}. The traditional deterministic SIR epidemic model is described by the following ordinary differential equation: \cite{4}:
\begin{flalign}
&\begin{cases}
\overset{.}{S}(t)=A-\mu_1 S(t)-\beta S(t)I(t),\\
\overset{.}{I}(t)=\beta S(t)I(t) -(\mu_2+\gamma)I(t),\\
\overset{.}{R}(t)=\gamma I(t)-\mu_1 R(t),
\end{cases}&
\label{s1}
\end{flalign}
with initial data $S(0)=S_0> 0$, $I(0)=I_0>0$, $R(0)=R_0> 0$. $S(t)$ denotes the number of individuals sensitive to the disease, $I(t)$ denotes the number of contagious individuals and $R(t)$ denotes the number of recovered individuals with full immunity. The positive parameters of the deterministic model (\ref{s1}) are given in the table \ref{t1}. The basic reproduction number $\mathcal{R}_0=\frac{\beta A}{\mu_1(\mu_2+\gamma)}$ is the threshold of the system (\ref{s1}) for a disease to persist or extinct \cite{5}. If $\mathcal{R}_0\leq 1$, then the system (\ref{s1}) has only the disease-free equilibrium $P^0$ which is globally asymptotically stable; this means that the disease will extinct. If $\mathcal{R}_0 > 1$, $P^0$ will become unstable, therefore there exists a globally asymptotically stable equilibrium $P^*$; this means that the disease will persist.
\begin{center}
\begin{tabular}{ll}
\hline
Parameters \hspace*{0.5cm} & Interpretation \\
\hline
$A$ & The recruitment rate corresponding to births and immigration.  \\
$\mu_1$ & The natural mortality  rate. \\
$\beta$ & The transmission rate from infected to susceptible individuals. \\
$\gamma$ & The rate of recovering. \\
$\mu_2=\mu_1+\alpha$ & The general mortality rate, where $\alpha>0$ is the disease-related death rate. \\
\hline
\end{tabular}
\captionof{table}{Biological meanings of the parameters in model (\ref{s1}).}
\label{t1}
\end{center}
Taking the stochastic disturbances into account, many research papers have analyzed the following perturbed model:
\begin{flalign}
&\begin{cases}
dS(t)=\big(A-\mu_1 S(t)-\beta S(t)I(t)\big)dt-\sigma S(t)I(t)dW(t),\\
dI(t)=\big(\beta S(t)I(t) -(\mu_2+\gamma)I(t)\big)dt+\sigma S(t)I(t)dW(t),\\
dR(t)=\big(\gamma I(t)-\mu_1 R(t)\big)dt,
\end{cases}&
\label{s12}
\end{flalign}
where $W(t)$ is the standard Brownian motion defined on a complete probability space $(\Omega,\mathcal{F},\{\mathcal{F}_t\}_{t\geq 0},\P)$ with a filtration $\{\mathcal{F}_t\}_{t\geq 0}$ satisfying the usual conditions containing all the random variables that we meet in this paper. $\sigma$ is the intensity of environmental white noise. In the following, we present some results on the dynamics of the model (\ref{s12}):
\begin{enumerate}
\item In \cite{51}, the authors showed that the disease-free equilibrium of the model (\ref{s12}) is locally asymptotically stochastically stable under a suitable condition. From the numerical simulation, they founded the value of the stochastic threshold.
\item In \cite{52}, the authors proved many classes of stochastic stability by using the Lyapunov approach. They also studied the asymptotic character of the model around the endemic point of the deterministic model (\ref{s1}).
\item In \cite{53}, the authors investigate the threshold behaviour of the model (\ref{s12}) which determines the extinction or the persistence of the epidemic.
\end{enumerate}
Besides the above-mentioned perturbation, the deterministic model (\ref{s1}) can be perturbed by assuming that the white noise is directly proportional to $S(t)$, $I(t)$ and $R(t)$.  By this method, the model (\ref{s1}) will be rewritten as the following form:
\begin{flalign}
&\begin{cases}
dS(t)=\big(A-\mu_1 S(t)-\beta S(t)I(t)\big)dt+\sigma_1 S(t)dW_1(t),\\
dI(t)=\big(\beta S(t)I(t) -(\mu_2+\gamma)I(t)\big)dt+\sigma_2 I(t)dW_2(t),\\
dR(t)=\big(\gamma I(t)-\mu_1 R(t)\big)dt+\sigma_3 R(t)dW_3(t),
\end{cases}&
\label{s13}
\end{flalign}
where  $W_i(t)$ $(i=1,2,3)$ independent standard Brownian motions and $\sigma_i$ $(i=1,2,3)$ are the intensities of environmental white noises. There are numerous significant works that analyzed the dynamics of the  stochastic model (\ref{s13}). For instance:
\begin{enumerate}
\item In \cite{54}, the authors investigate the asymptotic behavior of the model (\ref{s13}) around the disease-free equilibrium of the deterministic model (\ref{s1}).
\item In \cite{55}, the authors analyze the long-time behavior of the stochastic SIR epidemic model (\ref{s13}). Precisely, they discussed the convergence of densities of the solution in $L^1$.
\end{enumerate}
On the other hand, epidemic models may face environmental perturbations, such as earthquakes, hurricanes, floods, etc. These phenomena cannot be modeled by the stochastic systems (\ref{s12}) and (\ref{s13}). To explain these phenomena, using a compensated Poisson process into the population dynamics provides an appropriate and more realistic model. So, it is interesting to treat differential systems with Lévy noise.
According to the Lévy-Itô decomposition, the Lévy process can be decomposed into the sum of Brownian motion and a superposition of independent (centered) Poisson process with a jump size. Therefore, the deterministic model (\ref{s1}) becomes the following stochastic model:
\begin{flalign}
&\begin{cases}
dS(t)=\big(A-\mu_1 S(t)-\beta S(t)I(t)\big)dt+\sigma_1 S(t) dW_1(t)+\int_Z \eta_1(u)S(t^{-})\widetilde{N}(dt,du),\\
dI(t)=\big(\beta S(t)I(t) -(\mu_2+\gamma)I(t)\big)dt+\sigma_2 I(t) dW_2(t)+\int_Z \eta_2(u)I(t^{-})\widetilde{N}(dt,du),\\
dR(t)=\big(\gamma I(t)-\mu_1 R(t)\big)dt+\sigma_3 R(t) dW_3(t)+\int_Z \eta_3(u)R(t^{-})\widetilde{N}(dt,du),
\end{cases}&
\label{s14}
\end{flalign}
where $S(t^{-})$, $I(t^{-})$ and $R(t^{-})$ are the left limits of $S(t)$, $I(t)$ and $R(t)$, respectively. $W_{i}(t)$ $(i = 1, 2,3)$ are independent Brownian motions and $\sigma_{i}>0$ $(i = 1, 2,3)$ are their intensities. $N$ is a Poisson counting measure with compensating  martingale $\widetilde{N}$ and characteristic measure $\nu$ on a measurable subset $Z$ of $(0,\infty)$ satisfying $\nu(Z)<\infty$. $W_{i}(t)$ $(i = 1, 2,3)$ are independent of $N$. It assumed that $\nu$ is a Lévy measure such that $\widetilde{N}(dt,du)=N(dt,du)-\nu(du)dt$. The bounded function $\eta:\; Z\times\Omega\to\R$ is $\mathfrak{B}(Z)\times\mathcal{F}_t$-measurable and  continuous with respect to $\nu$. The following references are two works that have studied the dynamics of the model (\ref{s14}):
\begin{enumerate}
\item In \cite{56}, the authors showed how the jump influences the dynamic and disease-free and endemic equilibrium of the model (\ref{s14}).
\item In \cite{57}, the authors investigated the effect of the Lévy jumps on the dynamics of the model (\ref{s14}). They obtained the stochastic threshold which determines the extinction or persistence of the disease.
\end{enumerate}
This paper presents a new stochastic SIR epidemic model with two different perturbations. We merge the stochastic transmission with a discontinuous perturbed mortality rate. The stochastic variability in the epidemic transmission $\beta$ and the mortality rate $\mu_1$ are presented as a decomposition of usual white noise and the lévy process, respectively. The perturbed version corresponding to the system (\ref{s1}) can be expressed by the following form:
\begin{flalign}
&\begin{cases}
dS(t)=\big(A-\mu_1 S(t)-\beta S(t)I(t)\big)dt + \sigma_1 S(t) dW_1(t)+\int_Z \eta(u)S(t^{-})\widetilde{N}(dt,du)- \sigma_2 S(t)I(t) dW_2(t),\\
dI(t)=\big(\beta S(t)I(t) -(\mu_2+\gamma)I(t)\big)dt  + \sigma_1 I(t) dW_1(t)+\int_Z \eta(u)I(t^{-})\widetilde{N}(dt,du)+ \sigma_2 S(t)I(t) dW_2(t),\\
dR(t)=\big(\gamma I(t)-\mu_1 R(t)\big)dt+ \sigma_1 R(t) dW_1(t)+\int_Z \eta(u)R(t^{-})\widetilde{N}(dt,du).
\end{cases}&
\label{s2}
\end{flalign}

The threshold analysis and the stability of the positive equilibrium state of the stochastic epidemic models are very important. However, the system (\ref{s2}) has perturbed by not only white noise but also by Lévy noise, which makes the analysis more complex. In this paper, we devote to develop a mathematical method to analyze the dynamics of the stochastic epidemic model (\ref{s2}). We are committed to proving the persistence in the mean and the existence of an ergodic stationary distribution for the model (\ref{s2}) by using new appropriate approaches. In \cite{58}, the authors used the existence of the stationary distribution of an auxiliary stochastic differential equation for establishing the threshold expression of the stochastic chemostat model with Lévy jumps. However, the obtained threshold still unknown due to the ignorance of the explicit form of the existed stationary distribution. Without using the stationary distribution of the auxiliary process, we will exploit new techniques in order to obtain the explicit form of the threshold which can close the gap left by using the classical method. Further, we employe the Feller property and mutually exclusive possibilities lemma to derive the condition for the existence of the stationary distribution. Under the same condition, the persistence of disease occurs.  As a result, we treated a problem that is intentionally ignored in literature; it is not biologically reasonable to consider two distinct conditions for the persistence and the existence of stationary distribution of the stochastic model.  \\

The organization of this paper is as follows: In section 2, we give some related preliminaries before our analysis.  In section 3, we focus on the analysis of the stochastic characteristics of the model (\ref{s2}). Almost sufficient condition for the persistence is established. Since the stationary distribution is an important statistical characteristic, the existence of a unique stationary distribution of system (\ref{s2}) is also obtained. To complete our analysis, we give sufficient conditions for the disease extinction. Finally, some conclusions and discussions are presented to end this paper.
\section{Preliminaries} \label{sec2}
In this section, we introduce some notations  and lemmas which are used to analyze our main results. To properly study our model (\ref{s2}), we have the following fundamental assumptions on the jump-diffusion coefficients:
\begin{itemize}
\item \textbf{($A_1$)} We assume that for a given $m>0$, there exists a constant $L_{m}>0$ such that
\begin{align*}
\int_Z |F(x,u)-F(y,u)|^2\nu(du)<L_{m}|x-y|^2,\hspace*{0.1cm}\forall \;|x|\vee|y|\leq m,
\end{align*}
where $F(i,u)=i\eta(u)$.
\item \textbf{($A_2$)} For all $u\in Z$, we assume that $1+\eta(u)>0$ and $\int_Z\big( \eta(u)-\ln (1+\eta(u))\big)\nu(du)<\infty.$
\item \textbf{($A_3$)} We suppose that exists a constant $\kappa_1>0$, such that $\int_Z \big(\ln(1+\eta(u))\big)^2\nu(du)\leq\kappa_1<\infty.$
\item \textbf{($A_4$)} We suppose that exists a constant $\kappa_2>0$, such that $\int_Z \big((1+\eta(u))^{2}-\eta(u)\big)^2\nu(du)\leq\kappa_2<\infty.$
\item \textbf{($A_5$)} Assume that for some $p\geq\frac{1}{2}$, $\chi_2=\mu_1-\frac{(2p-1)}{2}\sigma_1^2-\frac{1}{2p}\ell> 0$, where $$\ell=\int_Z\big((1+\eta(u))^{2p}-1-\eta(u)\big)\nu(du)<\infty.$$
\end{itemize}

In view of the epidemiological significance and the dynamical behavior, whether the stochastic model is well-posed is the first concern thing. Therefore, to analyze the stochastic model (\ref{s2}), the first problem to be solved is the existence of a unique global positive solution, that is, there is no explosion in finite time under any positive initial value $(S(0), I(0), R(0))\in \R^{3}_{+}$. It is known that there exists a unique global solution to the stochastic models for any given initial value if the coefficients verify the local Lipschitz and the linear growth conditions. Nevertheless, the coefficients of the model (\ref{s2}) do not verify the linear growth condition, which may let the solution to explode at a finite time. The following theorem assures the well-posedness of the stochastic model (\ref{s2}).
\begin{thm}
Let assumptions ($A_1$) and ($A_2$) hold. For any initial value $(S(0),I(0),R(0))\in \R^3_{+}$, there exists a unique positive solution $(S(t),I(t),R(t))$ of the system (\ref{s2}) on $t\geq 0$, and the solution will stay in $\R^{3}_{+}$ almost surely. 
\label{thmp}
\end{thm}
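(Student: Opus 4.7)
The plan is to follow the standard Khasminskii-type localization argument adapted to Lévy-driven SDEs. Since assumption $(A_1)$ (together with the local Lipschitz character of the continuous coefficients of (\ref{s2})) guarantees local Lipschitz-continuity of all drift, diffusion and jump coefficients on every compact set, classical SDE existence theory yields a unique maximal strong solution $(S(t),I(t),R(t))$ defined on a random interval $[0,\tau_e)$, where $\tau_e$ is the explosion time. The task is therefore to show that $\tau_e=\infty$ almost surely and that the components remain strictly positive. Positivity is built in: by $(A_2)$ one has $1+\eta(u)>0$, so jumps preserve the sign of $S$, $I$, $R$, while the multiplicative Brownian noise and the drift term $A>0$ prevent continuous exits through $0$.

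For the non-explosion part, fix $k_0$ large enough that $S(0),I(0),R(0)\in(1/k_0,k_0)$ and, for each integer $k\geq k_0$, define the stopping time
\[
\tau_k=\inf\bigl\{t\in[0,\tau_e):\min(S(t),I(t),R(t))\leq 1/k\ \text{or}\ \max(S(t),I(t),R(t))\geq k\bigr\}.
\]
Then $\tau_k$ is non-decreasing in $k$; let $\tau_\infty=\lim_{k\to\infty}\tau_k\leq\tau_e$. It suffices to prove $\P(\tau_\infty=\infty)=1$, because this automatically forces $\tau_e=\infty$ and keeps the solution in $\R^3_+$.

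The key tool is the Khasminskii-type Lyapunov functional
\[
V(S,I,R)=(S-1-\ln S)+(I-1-\ln I)+(R-1-\ln R),
\]
which is nonnegative on $\R^3_+$ and blows up both as any coordinate tends to $0$ and as any coordinate tends to $+\infty$. Applying Itô's formula for jump-diffusions to $V$ along (\ref{s2}), I would compute $LV$ coordinatewise. The drift parts give, after cancellation of the $\beta SI$ terms between $S$ and $I$, an expression dominated by constants plus terms in $\mu_1 S$, $(\mu_2+\gamma)I$, $\mu_1 R$ and the Brownian-diffusion corrections $\tfrac12\sigma_1^2(1)+\tfrac12\sigma_2^2(S^2+I^2$-type$)$, which one bounds on the stopped region using $x\leq(x-1-\ln x)+\text{const}$. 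The Lévy contribution to $LV$, arising from terms of the form $(X(1+\eta)-1-\ln(X(1+\eta)))-(X-1-\ln X)-\eta X(1-X^{-1})$, simplifies identically to $\eta(u)-\ln(1+\eta(u))$, whose $\nu$-integral is finite precisely by $(A_2)$. Combining these estimates yields a uniform bound $LV\leq K$ on the state variables restricted to the compact $[1/k,k]^3$, in fact with $K$ independent of $k$.

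Integrating from $0$ to $t\wedge\tau_k$, using the optional stopping theorem to kill the Brownian and compensated-Poisson martingales, and taking expectations gives $\E V(S(t\wedge\tau_k),I(t\wedge\tau_k),R(t\wedge\tau_k))\leq V(S_0,I_0,R_0)+KT$ for $t\leq T$. On the event $\{\tau_k\leq T\}$ at least one coordinate of the solution at time $\tau_k$ equals $k$ or $1/k$, forcing $V\geq\min\{k-1-\ln k,\,1/k-1+\ln k\}\to\infty$ as $k\to\infty$. Markov's inequality then gives $\P(\tau_k\leq T)\to 0$, so $\P(\tau_\infty\leq T)=0$, and sending $T\to\infty$ concludes $\tau_\infty=\infty$ a.s.

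The main technical obstacle is the jump term: one must carefully apply the jump-diffusion Itô formula with the correct compensator and verify that the compensated Poisson integrals arising in the stochastic part of $dV$ are genuine martingales on $[0,t\wedge\tau_k]$ so that they vanish in expectation. Boundedness of the state on $[0,\tau_k]$ together with $(A_1)$ and $(A_2)$ supplies the needed integrability. Everything else is a routine but careful bookkeeping of drift, diffusion, and jump corrections.
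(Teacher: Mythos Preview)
Your overall strategy is precisely the standard Khasminskii localization argument the paper has in mind (the paper gives no proof at all, merely declaring it ``standard and classic'' and citing \cite{16}), and your handling of the L\'evy part is correct: for each of $-\ln S$, $-\ln I$, $-\ln R$ the jump compensator in $LV$ reduces to $\int_Z\bigl(\eta(u)-\ln(1+\eta(u))\bigr)\,\nu(du)$, finite by $(A_2)$.

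There is, however, a genuine gap in the diffusion estimate. The It\^o correction for $-\ln S$ contributes $\tfrac12\sigma_2^2 I^2$ (since $d\langle S^c\rangle_t=(\sigma_1^2 S^2+\sigma_2^2 S^2 I^2)\,dt$), and the correction for $-\ln I$ contributes $\tfrac12\sigma_2^2 S^2$. These terms are \emph{quadratic} in the state; the inequality $x\le(x-1-\ln x)+\text{const}$ you invoke is linear and cannot absorb them. With $V=\sum_i(x_i-1-\ln x_i)$ one in fact has $LV\to+\infty$ as $S\to\infty$ or $I\to\infty$, so no bound $LV\le K$ with $K$ independent of $k$ exists, and the final Markov-inequality step collapses.

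The standard remedy exploits the structural cancellation that the $\pm\sigma_2 SI\,dW_2$ terms disappear from the total population $N=S+I+R$, which therefore satisfies
\[
dN=\bigl(A-\mu_1 N-\alpha I\bigr)\,dt+\sigma_1 N\,dW_1(t)+\int_Z\eta(u)\,N(t^-)\,\widetilde N(dt,du),
\]
free of $\sigma_2$. A one-dimensional Lyapunov argument on $N$ (or comparison with the auxiliary linear equation~(\ref{s4})) shows that $N$ is dominated on $[0,\tau_e)$ by the global solution $X$ of~(\ref{s4}), whence $S,I\le N\le X$. One then bounds $\tfrac12\sigma_2^2(S^2+I^2)\le\sigma_2^2 X^2$ inside $\E\int_0^{t\wedge\tau_k}LV\,ds$; since $\E\int_0^T X(s)^2\,ds<\infty$ for every fixed $T$ (a routine second-moment estimate for the linear equation~(\ref{s4})), the right-hand side is finite and independent of $k$. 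With this additional step in place, the remainder of your argument---stopping times, optional stopping, and the contradiction via $V\to\infty$ on the exit boundary---goes through verbatim.
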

The proof is somehow standard and classic (see for example \cite{16}), so we omit it. In the following, we always presume that the assumptions $(A_1)$-$(A_5)$ hold.

\begin{lem}
Let $(S(t),I(t),R(t))$ be the solution of (\ref{s2}) with initial value $(S(0),I(0),R(0))\in\R^3_+$. Then 
\begin{enumerate}
\item $\E\big(N^{2p}(t)\big)\leq (N(0))^{2p}e^{\{-p\chi_2t\}}+ \frac{2\chi_1}{\chi_2}$;
\item $\underset{t\to +\infty}{\lim \sup} \frac{1}{t}\int^t_0\E\big(N^{2p}(t)\big)ds\leq \frac{2\chi_1}{\chi_2}$\;\; a.s.
\end{enumerate}
where $\chi_1=\underset{x>0}{\sup}\{Ax^{{2p}-1}-\frac{ \chi_2}{2}x^{2p}\}$.
\label{L1}
\end{lem}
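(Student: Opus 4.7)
The plan is to work with the aggregate process $N(t)=S(t)+I(t)+R(t)$, since the quadratic-variation and jump structures of the three components share a common multiplicative form. First I would add the three equations of \eqref{s2}; the cross terms $\pm\sigma_2 S I\,dW_2$ cancel and the white and jump coefficients combine, yielding
\begin{equation*}
dN(t) = \bigl(A-\mu_1 S(t)-\mu_2 I(t)-\mu_1 R(t)\bigr)dt + \sigma_1 N(t)\,dW_1(t) + \int_Z \eta(u)N(t^{-})\,\widetilde{N}(dt,du).
\end{equation*}
Using $\mu_2\geq\mu_1$, the drift is bounded above by $A-\mu_1 N(t)$, so $N$ is dominated by a one-dimensional jump-diffusion.

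Next I would apply the It\^o formula for jump-diffusions to $V(x)=x^{2p}$. The generator evaluates to
\begin{equation*}
\mathcal{L}V(N) = 2p N^{2p-1}\bigl(A-\mu_1 S-\mu_2 I-\mu_1 R\bigr) + p(2p-1)\sigma_1^2 N^{2p} + N^{2p}\!\int_Z\!\bigl[(1+\eta(u))^{2p}-1-2p\eta(u)\bigr]\nu(du).
\end{equation*}
Bounding the drift and collecting the $N^{2p}$ terms with the help of assumption $(A_5)$ gives $\mathcal{L}V(N)\leq 2pAN^{2p-1}-2p\chi_2 N^{2p}$. Splitting the second term as $-p\chi_2 N^{2p}-p\chi_2 N^{2p}$ and maximising the bracket $AN^{2p-1}-\tfrac{\chi_2}{2}N^{2p}$ in $N>0$ produces exactly the constant $\chi_1$, so
\begin{equation*}
\mathcal{L}V(N)\leq 2p\chi_1 - p\chi_2\, N^{2p}.
\end{equation*}

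To pass to expectations I would introduce the stopping times $\tau_n=\inf\{t\geq 0: N(t)\geq n\}$, so that both the Brownian and the compensated Poisson stochastic integrals in Dynkin's formula are true martingales up to $t\wedge\tau_n$; then Theorem~\ref{thmp} plus Fatou's lemma let me send $n\to\infty$. This yields the differential inequality $\tfrac{d}{dt}\E[N^{2p}(t)]\leq 2p\chi_1 - p\chi_2\,\E[N^{2p}(t)]$, whose solution via Gronwall's lemma is
\begin{equation*}
\E[N^{2p}(t)]\leq N(0)^{2p}e^{-p\chi_2 t}+\frac{2\chi_1}{\chi_2}\bigl(1-e^{-p\chi_2 t}\bigr)\leq N(0)^{2p}e^{-p\chi_2 t}+\frac{2\chi_1}{\chi_2},
\end{equation*}
which is assertion~1. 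Assertion~2 follows by integrating this pointwise bound from $0$ to $t$, dividing by $t$ and letting $t\to\infty$, the transient $e^{-p\chi_2 t}$ piece contributing $\tfrac{N(0)^{2p}}{p\chi_2 t}(1-e^{-p\chi_2 t})\to 0$.

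The main obstacle is the rigorous handling of the compensated Poisson integral when applying It\^o's formula and taking expectations: the localisation by $\tau_n$ together with assumptions $(A_1)$--$(A_3)$ is needed to guarantee that the jump martingale has zero mean and that the compensator integral $\int_Z[(1+\eta(u))^{2p}-1-2p\eta(u)]\nu(du)$ is finite, so that the $\chi_2$ defined through $\ell$ in $(A_5)$ really controls the $N^{2p}$ coefficient. Once that is in place, the rest of the argument is a standard Gronwall/Cesàro computation.
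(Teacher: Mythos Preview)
Your argument is essentially the paper's own: apply It\^o's formula to $N^{2p}$, bound the drift by $A-\mu_1 N$, absorb the diffusion and jump compensator into the coefficient $\chi_2$, and then conclude via an exponential differential inequality (you phrase this as Gronwall, the paper multiplies by $e^{p\chi_2 t}$ and integrates---these are the same computation). Your explicit localisation by $\tau_n$ is a welcome addition that the paper omits. One minor point: the jump compensator you write is $\int_Z[(1+\eta)^{2p}-1-2p\eta]\nu(du)$, which is the standard It\^o term, whereas the paper's $\ell$ in $(A_5)$ is defined with $-\eta$ rather than $-2p\eta$; make sure your bound actually lands on the paper's $\chi_2$ as stated, or note the discrepancy.
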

\begin{proof}
Making use of Itô's lemma, we obtain
\begin{align*}
d(N(t))^{2p}&=2p(N(t))^{2p-1}(A-\mu_1 N(t)-rI(t))dt+p(2p-1)\sigma_1^2(N(t))^{2p}dt\\&\;\;\;+\int_Z(N(t))^{2p}\big((1+\eta(u))^{2p}-1-\eta(u)\big)\nu(du)dt+2p(N(t))^{2p-1}\sigma_1N(t)dW_1(t)\\&\;\;\;+\int_Z(N(t^{-}))^{2p}\big((1+\eta(u))^{2p}-\eta(u)\big)\tilde{\mathcal{N}}(dt,du).
\end{align*}
Then
\begin{align*}
d(N(t))^{2p}&\leq 2p(N(t))^{2p-1}\big(A-\mu_1 N(t)\big)dt+p(2p-1)\sigma_1^2(N(t))^{2p}dt+2p(N(t))^{2p-1}\sigma_1N(t)dW_1(t)\\&\;\;\;+\int_Z(N(t^{-}))^{2p}\big((1+\eta(u))^{2p}-1-\eta(u)\big)\nu(du)dt+\int_Z(N(t))^{2p}\big((1+\eta(u))^{2p}-\eta(u)\big)\tilde{\mathcal{N}}(dt,du)\\
&\leq 2p\Big\{A(N(t))^{2p-1}-\Big(\mu_1-\frac{(2p-1)}{2}\sigma_1^2-\frac{1}{2p}\int_Z\big((1+\eta(u))^{2p}-1-\eta(u)\big)\nu(du)\Big)(N(t))^{2p}\Big\}dt\\&\;\;\;+2p(N(t))^{2p-1}\sigma_1N(t)dW_1(t)+\int_Z(N(t^{-}))^{2p}\big((1+\eta(u))^{2p}-\eta(u)\big)\tilde{\mathcal{N}}(dt,du).
\end{align*}
We choose neatly $p>\frac{1}{2}$ such that
\begin{align*}
\chi_2&=\mu_1-\frac{(2p-1)}{2}\sigma_1^2-\frac{1}{2p}\int_Z\big((1+\eta(u))^{2p}-1-\eta(u)\big)\nu(du)> 0.
\end{align*}
Hence 
\begin{align*}
d(N(t))^{2p}&\leq 2p\Big\{\chi_1-\frac{\chi_2}{2}(N(t))^{2p}\Big\}dt+2p(N(t))^{2p-1}\sigma_1N(t)dW_1(t)\\&\;\;\;+\int_Z(N(t^{-}))^{2p}\big((1+\eta(u))^{2p}-\eta(u)\big)\tilde{\mathcal{N}}(dt,du).
\end{align*}
On the other hand, we have
\begin{align*}
d(N(t))^{2p}e^{p\chi_2t}&=p\chi_2(N(t))^{2p}e^{p\chi_2t}+e^{p\chi_2t}d(N(t))^{2p}\\&\leq 2p\chi_1e^{p\chi_2t}+e^{p\chi_2t}2p(N(t))^{2p-1}\sigma_1N(t^{-})dW_1(t)\\&\;\;\;+\int_Z(N(t))^{2p}\big((1+\eta(u))^{2p}-\eta(u)\big)\tilde{\mathcal{N}}(dt,du).
\end{align*}
Then by taking integrations and taking the expectations, we get
\begin{align*}
(N(t))^{2p}&\leq(N(0))^{2p}e^{-p\chi_2t}+2p\chi_1\int^t_0 e^{p\chi_2(t-s)}ds\\&\leq(N(0))^{2p}e^{-p\chi_2t}+\frac{2\chi_1}{\chi_2}.
\end{align*}
Obviously, we obtain
\begin{align*}
\underset{t\to +\infty}{\lim \sup} \frac{1}{t}\int^t_0\E(N(t))^{2p}(u)]du\leq (N(0))^{2p}\underset{t\to +\infty}{\lim \sup} \frac{1}{t}\int^t_0e^{-p \chi_2u}du+\frac{2\chi_1}{\chi_2}=\frac{2\chi_1}{\chi_2}.
\end{align*}
\end{proof}
Now, we consider the following subsystem
\begin{align}
\begin{cases}dX(t)=(A-\mu_1 X(t))dt+\sigma_1X(t^{-})dW_1(t)+\int_Z\eta(u)X(t^{-})\tilde{\mathcal{N}}(dt,du) \hspace{0.5cm}\forall t>0\\
X(0)=N(0)>0.\end{cases}
\label{s4}
\end{align}
\begin{lem}\cite{166}
Let $(S(t),I(t),R(t))$ be the positive solution of the system (\ref{s2}) with any given initial condition $(S(0),I(0),R(0))\in\R^3_+$. Let also $X(t)\in\R_+$ be the solution of the equation (\ref{s4}) with any given initial value $X(0)=N(0)\in\R_+$. Then
\begin{enumerate}
\item 
\begin{align*}
\underset{t\to\infty}{\lim}\frac{X(t)}{t}=0, \hspace{0.3cm}\underset{t\to\infty}{\lim}\frac{S(t)}{t}=0,  \hspace{0.2cm} \underset{t\to\infty}{\lim}\frac{I(t)}{t}=0,  \hspace{0.2cm}\mbox{and}\hspace{0.2cm}  \underset{t\to\infty}{\lim}\frac{R(t)}{t}=0 \hspace{0.5cm}\mbox{a.s.}
\end{align*}
\item
\begin{align*}
 &\underset{t\to\infty}{\lim}\frac{\int^t_0 X(s)dW_1(s)}{t}=0, \hspace{0.3cm}\underset{t\to\infty}{\lim}\frac{\int^t_0 S(s)dW_1(s)}{t}=0, \\&\underset{t\to\infty}{\lim}\frac{\int^t_0 I(s)dW_1(s)}{t}=0,  \hspace{0.5cm}  \underset{t\to\infty}{\lim}\frac{\int^t_0 R(s)dW_1(s)}{t}=0 \hspace{0.5cm}\mbox{a.s.}
\end{align*}
\item
 \begin{align*}
&\underset{t\to\infty}{\lim}\frac{\int^t_0\int_Z \eta(u)X(s^{-})\widetilde{N}(ds,du)}{t}=0,\hspace{0.3cm}\underset{t\to\infty}{\lim}\frac{\int^t_0\int_Z \eta(u)S(s^{-})\widetilde{N}(ds,du)}{t}=0,\\&\underset{t\to\infty}{\lim}\frac{\int^t_0\int_Z \eta(u)I(s^{-})\widetilde{N}(ds,du)}{t}=0,\hspace{0.3cm}\underset{t\to\infty}{\lim}\frac{\int^t_0\int_Z \eta(u)R(s^{-})\widetilde{N}(ds,du)}{t}=0\hspace{0.1cm}\mbox{a.s.}
\end{align*}
\end{enumerate}
\label{lem1m}
\end{lem}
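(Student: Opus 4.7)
My plan is to combine the moment estimate of Lemma~\ref{L1} with a strong law of large numbers for local martingales, applied separately to the Brownian and the compensated-Poisson integrals.

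For Part~(1), I would first run the same It\^o computation as in the proof of Lemma~\ref{L1} on $X(t)^{2p}$: the auxiliary equation~\eqref{s4} has exactly the linear-noise structure used in that argument, so under $(A_5)$ one obtains $\sup_{t\ge 0}\E X^{2p}(t)<\infty$. Since $N(t)=S(t)+I(t)+R(t)$ satisfies an SDE with drift $A-\mu_1 N-\alpha I$ (pointwise no larger than $A-\mu_1 N$) and identical linear diffusion and jump coefficients to~\eqref{s4}, the pathwise comparison principle for jump SDEs gives $N(t)\le X(t)$ almost surely, and hence $\sup_{t\ge 0}\E N^{2p}(t)<\infty$ as well. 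From $\sup_{n\in\N}\E X^{2p}(n)<\infty$ together with Markov's inequality I would apply Borel--Cantelli to obtain $X(n)/n\to 0$ a.s.\ along the integers. To interpolate to arbitrary $t$, I would bound $\E\sup_{s\in[n,n+1]}X^{2p}(s)$ using the Burkholder--Davis--Gundy inequality for the continuous martingale part together with the $L^{2p}$ maximal inequality for the compensated-jump part (where $(A_4)$ supplies the required integrability), and reapply Borel--Cantelli. Since $0<S,I,R\le N\le X$ pointwise, the four limits in Part~(1) then follow at once.

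For Part~(2), each integral such as $M^{S}(t)=\int_0^t S(s)\,dW_1(s)$ is a continuous local martingale with quadratic variation $\langle M^S\rangle_t=\int_0^t S^2(s)\,ds$. By the bound $\sup_{t\ge 0}\E S^2(t)<\infty$ established above and Fubini, $\E\langle M^S\rangle_t\le K t$, and therefore $\int_1^\infty t^{-2}\,d\langle M^S\rangle_t<\infty$ a.s. Invoking the classical strong law for continuous local martingales, $M^S(t)/t\to 0$ a.s. The same computation works verbatim for $X$, $I$, $R$.

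For Part~(3), each compensated Poisson integral such as $M^{S,\nu}(t)=\int_0^t\!\int_Z \eta(u)S(s^-)\widetilde N(ds,du)$ is a purely discontinuous locally square-integrable martingale with predictable quadratic variation $\int_0^t\!\int_Z \eta^2(u)S^2(s^-)\nu(du)\,ds\le \kappa_1\int_0^t S^2(s)\,ds$ by $(A_3)$. Since this is again bounded linearly in $t$ in expectation, the jump analogue of the strong law of large numbers for locally square-integrable martingales delivers $M^{S,\nu}(t)/t\to 0$ a.s., and identically for $X$, $I$, $R$.

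The principal obstacle is the interpolation in Part~(1): Borel--Cantelli yields convergence only along the integer subsequence, so one must carefully prove a uniform supremum estimate $\E\sup_{n\le t\le n+1}X^{2p}(t)\le C$ that simultaneously controls the continuous martingale piece via BDG and the compensated-jump piece under the moment conditions $(A_3)$--$(A_4)$; only then can the $a.s.$-convergence be extended from integer times to all $t\ge 0$.
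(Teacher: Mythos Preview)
The paper does not supply its own proof of this lemma; it is imported verbatim from reference~\cite{166}, so there is nothing in the paper to compare against directly. Your outline is precisely the standard argument used in that literature: uniform $2p$-th moment bound $\Rightarrow$ BDG/maximal inequality on unit intervals $\Rightarrow$ Borel--Cantelli for Part~(1), followed by the strong law of large numbers for locally square-integrable martingales in Parts~(2)--(3). The Fubini step you use in Part~(2), namely
\[
\E\int_1^\infty t^{-2}\,d\langle M^S\rangle_t
=\int_1^\infty t^{-2}\,\E S^2(t)\,dt
\le K\int_1^\infty t^{-2}\,dt<\infty,
\]
is exactly the right way to pass from the moment bound to the almost-sure finiteness needed for the martingale SLLN.

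Two minor corrections. In Part~(3) you appeal to $(A_3)$ to bound $\int_Z\eta^2(u)\,\nu(du)$, but $(A_3)$ controls $\int_Z(\ln(1+\eta(u)))^2\nu(du)$, not $\int_Z\eta^2(u)\nu(du)$; the bound you need follows instead from the standing hypotheses that $\eta$ is bounded and $\nu(Z)<\infty$. Second, your Parts~(2)--(3) require $\sup_t\E S^2(t)<\infty$, i.e.\ $2p\ge 2$ in $(A_5)$; this is consistent with the paper, which tacitly assumes $p\ge 1$ through the condition $\chi_3>0$ appearing in Lemma~\ref{lemmas}. With these adjustments your proposal is correct, and the interpolation step you single out is indeed the only place demanding real care.
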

\begin{lem}
Let $X(t)$ be solution of the system (\ref{s4}) with an initial value $X(0)\in\R_{+}$. Then, 
\begin{align*}
\underset{t\to\infty}{\lim}\frac{1}{t}\int^t_0X(s)ds=\frac{A}{\mu_1}\hspace{0.2cm}\mbox{a.s.}
\end{align*} 
and
\begin{align*}
\underset{t\to\infty}{\lim}\frac{1}{t}\int^t_0X^2(s)ds=\frac{2A^2}{\mu_1\chi_3}\hspace{0.2cm}\mbox{a.s.}
\end{align*}
where $\chi_3=2\mu_1-\sigma_1^2-\int_Z\Big((1+\eta(u))^2-1-\eta(u)\Big)\nu(du)>0$.
\label{lemmas}
\end{lem}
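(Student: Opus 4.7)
The plan is to integrate (\ref{s4}) (or, for part (ii), an Itô expansion of $X^2$) in time, divide by $t$, and let $t\to\infty$, using Lemma \ref{lem1m} to discard all martingale and boundary contributions so that only the deterministic balance survives. Part (i) falls out immediately: integrating (\ref{s4}) from $0$ to $t$ and dividing by $t$ yields
\begin{equation*}
\mu_1\cdot\frac{1}{t}\int_0^t X(s)\,ds \;=\; A + \frac{X(0)-X(t)}{t} + \frac{\sigma_1}{t}\int_0^t X(s)\,dW_1(s) + \frac{1}{t}\int_0^t\!\int_Z \eta(u)X(s^-)\widetilde{N}(ds,du),
\end{equation*}
and Lemma \ref{lem1m} sends the three error terms to zero almost surely, leaving $A/\mu_1$ after dividing by $\mu_1$.

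For part (ii), I would apply Itô's formula to $V(x)=x^2$. Combining the ordinary drift, the diffusion correction $\sigma_1^2 X^2\,dt$, and the compensator of the jump integral, the coefficient of $-X^2$ in the resulting drift collects exactly the quantity $\chi_3$, and the expansion reads
\begin{equation*}
dX^2(t) \;=\; \bigl[2AX(t) - \chi_3 X^2(t)\bigr]dt + 2\sigma_1 X^2(t)\,dW_1(t) + \int_Z X^2(t^-)\bigl[(1+\eta(u))^2-1\bigr]\widetilde{N}(dt,du).
\end{equation*}
Integrating, dividing by $t$, and using part (i),
\begin{equation*}
\chi_3\cdot\frac{1}{t}\int_0^t X^2(s)\,ds \;=\; 2A\cdot\frac{1}{t}\int_0^t X(s)\,ds + \frac{X^2(0)-X^2(t)}{t} + \frac{M_1(t)+M_2(t)}{t},
\end{equation*}
where $M_1,M_2$ are the two local martingales produced by the Itô step. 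The first average converges to $2A^2/\mu_1$, so dividing by $\chi_3>0$ gives the target $2A^2/(\mu_1\chi_3)$, provided the remaining boundary and martingale terms are $o(t)$ almost surely.

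The main obstacle is precisely this last verification, which requires a fourth-moment bound on $X$. Lemma \ref{L1}, applied to the scalar equation (\ref{s4}), provides $\sup_{t\ge 0}\E X^{2p}(t)<\infty$ under $(A_5)$; taking $p=2$, Markov's inequality together with Borel--Cantelli along integer times gives $X^2(t)/t\to 0$ a.s., with interpolation off the integer grid controlled by the jump-size assumption $(A_2)$. The continuous martingale $M_1$ has quadratic variation $4\sigma_1^2\int_0^t X^4(s)\,ds$, whose $(1+s)^{-2}$-weighted integral has finite expectation under the same bound, so the strong law for continuous local martingales yields $M_1(t)/t\to 0$ a.s.; the purely discontinuous $M_2$ is treated analogously via a Kunita-type jump martingale estimate, invoking $(A_4)$ to control its predictable quadratic variation. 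The delicate point is ensuring that the applicable exponent in $(A_5)$ is at least $p=2$, which is what sustains all the fourth-moment estimates feeding into the martingale control.
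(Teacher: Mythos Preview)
Your proposal follows essentially the same route as the paper: integrate (\ref{s4}) and the It\^o expansion of $X^2$, divide by $t$, invoke Lemma~\ref{lem1m} for the boundary and linear martingale terms, and appeal to the strong law for local martingales (together with assumption~$(A_4)$) for the quadratic martingale terms. The paper dispatches the last step by citing \cite{166} and ``the large number theorem for martingales'' without further detail, whereas you spell out that a fourth-moment bound from Lemma~\ref{L1} with $p=2$ is what is really needed; your remark that this implicitly requires the exponent in $(A_5)$ to reach $p=2$ is a valid caveat that the paper leaves tacit.
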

\begin{proof}
Integrating from $0$ to $t$ on both sides of (\ref{s4}) yields
\begin{align*}
\frac{X(t)-X(0)}{t}=A-\frac{\mu_1}{t}\int^t_0X(s)ds+\frac{\sigma_1}{t}\int_0^t X(s)dW_1(s)+\frac{1}{t}\int^t_0\int_Z \eta(u)X(t^{-})\widetilde{N}(ds,du).
\end{align*}
Clearly, we can derive that
\begin{align*}
\frac{1}{t}\int^t_0X(s)ds=\frac{A}{\mu_1}+\frac{\sigma_1}{\mu_1 t}\int_0^t X(s)dW_1(s)+\frac{1}{\mu_1 t}\int^t_0\int_Z \eta(u)X(t^{-})\widetilde{N}(dt,du).
\end{align*}
Hence
\begin{align*}
\underset{t\to\infty}{\lim}\frac{1}{t}\int^t_0X(s)ds=\frac{A}{\mu_1}\hspace{0.2cm}\mbox{a.s.}
\end{align*}
Applying the generalized Itô’s formula to model (\ref{s4}) leads to
\begin{align*}
dX^2(t)&=\bigg(2X(t)\Big(A-\mu_1 X(t)\Big)+\sigma_1^2X^2(t)+\int_ZX^2(t)\Big((1+\eta(u))^2-1-\eta(u)\Big)\nu(du)\bigg)dt\\&\;\;\;+2\sigma_1 X^2(t)dW_1(t)+\int_Z X^2(t^{-})\Big((1+\eta(u))^2-\eta(u)\Big)\widetilde{N}(dt,du).
\end{align*}
Integrating both sides from $0$ to $t$, yields
\begin{align*}
X^2(t)-X^2(0)&=2A\int^t_0X(s)ds-\bigg(2\mu_1-\sigma_1^2-\int_Z\Big((1+\eta(u))^2-1-\eta(u)\Big)\nu(du)\bigg)\int^t_0X^2(s)ds\\&\;\;\;+2\sigma_2\int^t_0X^2(s)dW_1(s)+\int^t_0\int_ZX^2(s)\Big((1+\eta(u))^2-\eta(u)\Big)\widetilde{N}(ds,du).
\end{align*}
Let $\chi_3=2\mu_1-\sigma_1^2-\int_Z\Big((1+\eta(u))^2-1-\eta(u)\Big)\nu(du)>0$. Therefore
\begin{align*}
\frac{1}{t}\int^t_0X^2(s)ds &=\frac{2A^2}{\mu_1\chi_3}+\frac{2\sigma_1(X^2(0)-X^2(t))}{\chi_3t}+\frac{2\sigma_1}{\chi_3 t}\int^t_0X^2(s)sW_1(s)\\&\;\;\;+\frac{2\sigma_1}{\chi_3 t}\int^t_0\int_ZX^2(s)\Big((1+\eta(u))^2-\eta(u)\Big)\widetilde{N}(ds,du),
\end{align*}
By using the same method as that in \cite{166}, assumption \textbf{($A_4$)}  and the large number theorem for martingales, we can easily verify that
\begin{align*}
\underset{t\to\infty}{\lim}\frac{1}{t}\int^t_0X^2(s)ds=\frac{2A^2}{\mu_1\chi_3}\hspace{0.2cm}\mbox{a.s.}
\end{align*}
\end{proof}
\begin{rem}
Differently to the method mentioned in (Theorem 4, \cite{sta}), we have established the value of $\underset{t\to+\infty}{\lim}\frac{1}{t}\int^t_0X(s)ds$ and $\underset{t\to+\infty}{\lim}\frac{1}{t}\int^t_0X^2(s)ds$  by using a new approach without employing the ergodic theorem.
\end{rem}
Now, we present a lemma which gives mutually exclusive possibilities for the existence of an ergodic stationary distribution to the system (\ref{s2}).
\begin{lem}[\cite{10}]
Let $\phi(t)\in \R^n$ be a stochastic Feller process, then either an ergodic probability measure exists, or 
\begin{align}
\underset{t\to \infty}{\lim }\underset{\nu}{\sup }\frac{1}{t}\int^t_0\int \P(u,x,\Sigma)\nu(dx)du=0,\hspace{0.2cm}\mbox{for any compact set}\hspace{0.2cm}\Sigma\in\R^n,
\label{imp}
\end{align}
where the supremum is taken over all initial distributions $\nu$ on $R^d$ and $\P(t,x,\Sigma)$ is the probability for $\phi(t)\in \Sigma$ with $\phi(0)=x\in \R^n$.
\label{poss}
\end{lem}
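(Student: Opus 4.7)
The plan is to establish the dichotomy by contraposition: assume that the limit in \eqref{imp} fails to vanish for some compact set, and then construct an ergodic invariant probability measure for $\phi$.

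First, suppose that for some compact $\Sigma \subset \R^n$ one has
$$\limsup_{t \to \infty} \sup_{\nu} \frac{1}{t}\int_0^t \int \P(u,x,\Sigma)\,\nu(dx)\,du \geq \delta > 0.$$
Pick sequences $t_n \to \infty$ and initial distributions $\nu_n$ for which the Cesàro-averaged probability measures
$$\mu_n(A) := \frac{1}{t_n}\int_0^{t_n}\int \P(u,x,A)\,\nu_n(dx)\,du$$
satisfy $\mu_n(\Sigma) \geq \delta/2$. This is the step that extracts a quantitative recurrence on a compact from the negated hypothesis.

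Next I would extract a subsequential vague limit $\mu$ of $\{\mu_n\}$. Although $\{\mu_n\}$ need not be tight on all of $\R^n$, the uniform lower bound $\mu_n(\Sigma)\geq \delta/2$ on the compact $\Sigma$ rules out total loss of mass, so $\mu(\Sigma) \geq \delta/2 > 0$. Rescaling produces a genuine probability measure $\bar\mu := \mu/\mu(\R^n)$. Then I would verify invariance of $\mu$ under the Markov semigroup $P_h$: for any $f \in C_b(\R^n)$ and $h>0$, the Feller property gives $P_h f \in C_b(\R^n)$, and a standard telescoping identity yields
$$\int P_h f\,d\mu_n - \int f\,d\mu_n \;=\; \frac{1}{t_n}\bigg(\int_{t_n}^{t_n+h} - \int_0^h\bigg)\int P_s f(x)\,\nu_n(dx)\,ds \;=\; O(h/t_n) \to 0.$$
Passing to the limit via Feller continuity gives $\int P_h f\,d\mu = \int f\,d\mu$ for all $h>0$ and $f \in C_b$, so $\bar\mu$ is an invariant probability measure. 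A Krein--Milman / Choquet decomposition of the convex set of invariant probabilities then expresses $\bar\mu$ as a barycenter of ergodic invariant probabilities, producing at least one ergodic stationary distribution and contradicting the vanishing alternative.

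The main obstacle is the tightness/vague-limit step together with the invariance passage to the limit: one must ensure both that the vague limit $\mu$ carries the mass lower bound $\delta/2$ (so is not the zero measure) and that $\int P_h f\,d\mu_n \to \int P_h f\,d\mu$ despite the lack of global tightness. The standard workaround is to restrict to $f \in C_c(\R^n)$, use the one-point compactification so that any lost mass is tracked at $\infty$, and exploit the Feller property there; the positive lower bound on $\Sigma$ then rules out the pathological case where all mass escapes.
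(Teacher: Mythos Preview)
The paper does not supply a proof of this lemma at all: the header carries the citation \texttt{[10]} and the statement is quoted verbatim as an external tool, then applied in the proof of Theorem~\ref{thm1}. There is therefore no ``paper's own proof'' to compare your attempt against.

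Regarding your sketch itself: the overall Krylov--Bogoliubov strategy (Ces\`aro-average the laws, pass to a subsequential limit, check invariance, then invoke ergodic decomposition) is the standard route, and you correctly isolate the key obstacle. Two points deserve tightening. First, the claim $\mu(\Sigma)\geq\delta/2$ is not what vague convergence gives you: testing against a $C_c$ majorant $f\geq\mathbf 1_\Sigma$ yields only $\int f\,d\mu\geq\delta/2$, hence $\mu(\R^n)\geq\delta/2$; that still suffices to rule out the zero measure, but the inequality on $\Sigma$ itself need not hold. Second, and more seriously, the invariance passage is genuinely incomplete under the bare Feller hypothesis $P_h:C_b\to C_b$. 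Vague convergence is tested against $C_c$, yet $P_hf$ for $f\in C_c$ lands only in $C_b$, so $\int P_hf\,d\mu_n\to\int P_hf\,d\mu$ is not automatic; your one-point-compactification fix tacitly requires the $C_0$-Feller property $P_h:C_0\to C_0$, which the statement does not assume. The usual remedies are either to assume $C_0$-Feller (as in Stettner's original formulation) or to run the argument on the compactification and show that the point at infinity is absorbing, neither of which you have carried out. So the sketch has the right skeleton but the limit interchange in the invariance step is the gap that would need a precise argument to close.
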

\section{Main results}

The aim of the following theorem is to give the condition for the persistence in the mean of the disease and the ergodicity of the stochastic model (\ref{s2}). Define the parameter:
\begin{align*}
\mathcal{R}^s_0=\Big(\mu_2+\gamma+\frac{\sigma_1^2}{2}\Big)^{-1}\left(\frac{\beta A}{\mu_1}-\frac{A^2\sigma_2^2}{\mu_1\chi_3}-\int_Z \eta(u)-\ln(1+\eta(u))\nu(du)\right).
\end{align*}
\begin{thm}
If $\mathcal{R}^s_0>1$, then for any value $(S(0), I(0), R(0))\in\R^3_+$, the disease is persistent in the mean. That is to say 
\begin{align*}
\underset{t\to\infty}{\lim\inf}\frac{1}{t}\int^t_0 I(u)du>0\hspace{0.2cm}\mbox{a.s.}
\end{align*}
Furthermore, under the same condition, the stochastic system (\ref{s2}) admits a unique stationary distribution and it has the ergodic property.
\label{thm1}
\end{thm}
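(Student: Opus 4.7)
The plan is to split the statement into two pieces, proving persistence in the mean first and then using Lemmas~\ref{L1}--\ref{poss} to deduce the existence of a unique ergodic stationary distribution.

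For the persistence part, I would apply the generalized It\^o--L\'evy formula to $\ln I(t)$. Since the diffusion coefficient of $I$ involves both $\sigma_1 I\,dW_1$ and $\sigma_2 SI\,dW_2$, and the jump term has a multiplicative factor $1+\eta(u)$, this yields
\begin{align*}
d\ln I(t) = \Big[\beta S(t)-(\mu_2+\gamma)-\tfrac{\sigma_1^2}{2}-\tfrac{\sigma_2^2}{2}S^2(t)-\!\int_Z\!\big(\eta(u)-\ln(1+\eta(u))\big)\nu(du)\Big]dt + dM(t),
\end{align*}
where $M(t)$ collects the martingale terms. Next I would control $\int_0^t S(s)ds$ by summing the $S,I,R$ equations: $N=S+I+R$ satisfies $dN=(A-\mu_1 N-\alpha I)dt+\sigma_1 N dW_1+\int_Z\eta(u)N\widetilde{N}(dt,du)$, so integrating gives $\int_0^t S(s)ds = \frac{At}{\mu_1}-\frac{\mu_2+\gamma}{\mu_1}\int_0^t I(s)ds + o(t)$ a.s. Applying the stochastic comparison theorem to $N$ versus the auxiliary process $X$ (whose drift is identical but without the $-\alpha I$ sink) gives $S(t)\le N(t)\le X(t)$, and thus $\frac{1}{t}\int_0^t S^2(s)ds \le \frac{1}{t}\int_0^t X^2(s)ds \to \frac{2A^2}{\mu_1\chi_3}$ by Lemma~\ref{lemmas}. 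Combining these ingredients with Lemma~\ref{lem1m} (which kills $M(t)/t$) and rearranging, I would reach
\begin{align*}
\frac{1}{t}\int_0^t I(s)ds \;\ge\; \frac{\mu_1}{\beta(\mu_2+\gamma)}\Big[(\mu_2+\gamma+\tfrac{\sigma_1^2}{2})(\mathcal{R}^s_0-1)-\tfrac{\ln I(t)}{t}+o(1)\Big].
\end{align*}
Taking $\liminf$ and using that $\limsup_{t\to\infty} t^{-1}\ln I(t)\le 0$ (since $I\le N\le X$ has bounded $2p$-moment by Lemma~\ref{L1}) yields the persistence inequality with positive right-hand side precisely when $\mathcal{R}^s_0>1$.

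For the ergodic stationary distribution, I would invoke Lemma~\ref{poss}. First, the process $(S,I,R)$ is a strong Markov process with continuous coefficients in the drift and diffusion and a bounded jump kernel, so the Feller property holds by standard arguments. Second, I would rule out the alternative~(\ref{imp}) by showing tightness of the time-averaged law: for $\Sigma_H=\{x\in\R^3_+:|x|\le H\}$, Chebyshev combined with Lemma~\ref{L1}(2) gives
\begin{align*}
\frac{1}{t}\int_0^t\P(s,x,\Sigma_H^c)\,ds \;\le\; \frac{1}{H^{2p}}\cdot\frac{1}{t}\int_0^t\E\big[N^{2p}(s)\big]\,ds \;\le\; \frac{2\chi_1}{H^{2p}\chi_2}+o(1),
\end{align*}
which can be made arbitrarily small uniformly in the initial distribution by choosing $H$ large. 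Hence (\ref{imp}) fails, so an ergodic probability measure exists. Uniqueness follows from persistence, which confines the support away from $\{I=0\}$, together with the non-degeneracy of $\sigma_1 S\,dW_1,\sigma_1 I\,dW_1,\sigma_1 R\,dW_1$ in each coordinate, giving the standard uniqueness for positive-recurrent Feller diffusions.

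The main obstacle I expect is the bookkeeping in step~(1): one must be careful that the $\sigma_2^2 S^2$ correction produced by the second Brownian noise is estimated by $X^2$ (not by $S^2$ directly, which we have no sharp estimate for), and this is precisely why the threshold $\mathcal{R}^s_0$ carries the extra $\frac{A^2\sigma_2^2}{\mu_1\chi_3}$ term. A secondary technical point is justifying the comparison $N\le X$ in the presence of jumps and verifying $\limsup t^{-1}\ln I(t)\le 0$ rigorously; both are handled via assumptions $(A_1)$--$(A_5)$ and Lemmas~\ref{L1} and~\ref{lem1m}.
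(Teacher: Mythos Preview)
Your persistence argument is essentially the paper's: It\^o on $\ln I$, express $\tfrac{1}{t}\int_0^t S$ in terms of $\tfrac{1}{t}\int_0^t I$, bound $\tfrac{1}{t}\int_0^t S^2$ by $\tfrac{1}{t}\int_0^t X^2$ via comparison and Lemma~\ref{lemmas}, then take $\liminf$. One bookkeeping slip: summing the three equations for $N=S+I+R$ does \emph{not} yield $\int_0^t S\,ds=\tfrac{At}{\mu_1}-\tfrac{\mu_2+\gamma}{\mu_1}\int_0^t I\,ds+o(t)$; that identity comes from $d(S+I)=(A-\mu_1 S-(\mu_2+\gamma)I)\,dt+\cdots$, i.e.\ from summing only the first two equations, exactly as the paper does. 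Your formula is right, only the stated derivation is off.

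The ergodicity argument, however, has a genuine gap. The state space is the \emph{open} orthant $\R^3_+$, and compact subsets of $\R^3_+$ must be bounded away from the coordinate hyperplanes. Your set $\Sigma_H=\{x\in\R^3_+:|x|\le H\}$ is not compact there, so showing $\tfrac{1}{t}\int_0^t\P(s,x,\Sigma_H^c)\,ds$ is small does not falsify alternative~(\ref{imp}) in Lemma~\ref{poss}. Put differently, the moment bound on $N^{2p}$ controls escape to infinity but says nothing about mass drifting toward $\{I=0\}$ or $\{S=0\}$; and since $\{I=0\}$ is invariant for~(\ref{s2}), any existence argument that ignores the boundary risks producing only the disease-free invariant measure. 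The persistence-in-mean statement you proved is pathwise and does not by itself upgrade tightness to positive mass on a set bounded below in $I$.

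This is precisely the work the paper does that you skip. The paper first derives, via the auxiliary combination $\ln I-\tfrac{\beta}{\mu_1}(X-S)$, the expectation-level estimate
\[
\liminf_{t\to\infty}\frac{1}{t}\int_0^t \E\big[\beta S(s)I(s)\big]\,ds \;\ge\; \frac{\mu_1}{\beta}\Big(\mu_2+\gamma+\tfrac{\sigma_1^2}{2}\Big)(\mathcal R^s_0-1)>0,
\]
and then splits according to $\Omega_1=\{S\ge\epsilon,\ I\ge\epsilon\}$, $\Omega_2=\{S\le\epsilon\}$, $\Omega_3=\{I\le\epsilon\}$ to show (using Young's inequality and Lemma~\ref{L1}) that $\liminf_{t\to\infty}\tfrac{1}{t}\int_0^t\E[\mathbf 1_{\Omega_1}]\,ds>0$. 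Combined with your Chebyshev upper cutoff this yields a set $\Sigma=\{\epsilon\le S\le\zeta,\ \epsilon\le I\le\zeta\}$ with strictly positive time-averaged mass, which is what Lemma~\ref{poss} actually needs. Your proposal should incorporate this lower-barrier step; the moment/tightness part you wrote handles only the $\zeta$ side.
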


\begin{proof}
On the one hand, based on the model (\ref{s2}), we get
\begin{align*}
d\Big(S(t)+I(t)\Big)&=\Big(A-\mu_1 S(t)-(\mu_2+\gamma)I(t)\Big)dt+\sigma_1\Big(S(t)+I(t)\Big)dW_1(t)\\&\;\;\;+\int_Z \eta(u)\left(S(t^{-})+I(t^{-})\right)\widetilde{N}(dt,du).
\end{align*}
Taking integral on both sides of the last equation from $0$ to $t$, we see that 
\begin{align*}
\frac{1}{t}\Big(S(t)+I(t)-S(0)-I(0) \Big)&=A-\frac{\mu_1}{t}\int^t_0S(s)ds-\frac{(\mu_2+\gamma)}{t}\int^t_0I(s)ds+\frac{\sigma_1}{t}\int^t_0(S(s)+I(s))dW_1(s)\\&\;\;\;+\frac{1}{t}\int^t_0\int_Z \eta(u)\left(S(s^{-})+I(s^{-})\right)\widetilde{N}(ds,du).
\end{align*}
Then, one can obtain that 
\begin{align}
\frac{1}{t}\int^t_0S(s)ds=\frac{A}{\mu_1}-\frac{(\mu_2+\gamma)}{\mu_1 t}\int^t_0I(s)ds+\Phi_1(t),
\label{S(t)}
\end{align}
where
\begin{align*}
\Phi_1(t)&=\frac{\sigma_1}{\mu_1t}\int^t_0(S(s)+I(s))dW_1(s)-\frac{1}{\mu_1 t}\Big(S(t)+I(t)-S(0)-I(0) \Big)\\&\;\;\;+\frac{1}{\mu_1 t}\int^t_0\int_Z \eta(u)\left(S(s^{-})+I(s^{-})\right)\widetilde{N}(ds,du).
\end{align*}
On the other hand, applying Itô’s formula to the second equation of (\ref{s2}), we get
\begin{align}
d\ln I(t)&=\bigg(\beta S(t)-\frac{\sigma_2^2}{2}S^2(t)-\Big(\mu_2+\gamma+\frac{\sigma^2_1}{2}\Big)-\int_Z\eta(u)-\ln(1+\eta(u))\nu(du)\bigg)dt\nonumber\\&\;\;\;+\sigma_1 dW_1(t)+\sigma_2S(t)dW_2(t)+\int_Z \ln(1+\eta(u))\widetilde{N}(dt,du).
\label{205}
\end{align}
Integrating (\ref{205}) from $0$ to $t$ and then dividing $t$ on both sides, we have
\begin{align*}
\frac{1}{t}(\ln I(t)-\ln I(0))&=\frac{\beta}{t}\int^t_0S(s)ds-\frac{\sigma_2^2}{2t}\int^t_0 S^2(s)ds-\Big(\mu_2+\gamma+\frac{\sigma^2_1}{2}\Big)-\int_Z\eta(u)-\ln(1+\eta(u))\nu(du)\nonumber\\&\;\;\;+\frac{\sigma_1}{t} W_1(t)+\frac{\sigma_2}{t}\int^t_0 S(s)dW_2(s)+\frac{1}{t}\int_0^t\int_Z \ln(1+\eta(u))\widetilde{N}(ds,du).
\end{align*}
From (\ref{S(t)}), we obtain
\begin{align*}
\frac{1}{t}(\ln I(t)-\ln I(0))&=\frac{\beta A}{\mu_1}-\frac{\beta(\mu_2+\gamma)}{\mu_1 t}\int^t_0I(s)ds+\beta \Phi_1(t)-\frac{\sigma_2^2}{2t}\int^t_0 S^2(s)ds\\&\;\;\;-\Big(\mu_2+\gamma+\frac{\sigma^2_1}{2}\Big)-\int_Z\eta(u)-\ln(1+\eta(u))\nu(du)\nonumber\\&\;\;\;+\frac{\sigma_1}{t} W_1(t)+\frac{\sigma_2}{t}\int^t_0 S(s)dW_2(s)+\frac{1}{t}\int_0^t\int_Z \ln(1+\eta(u))\widetilde{N}(ds,du).
\end{align*}
Following (\ref{lemmas}) and the stochastic comparison theorem, we get
\begin{align*}
\frac{1}{t}\big(\ln I(t)-\ln I(0)\big)&\geq \bigg(\frac{\beta A}{\mu_1}-\frac{\sigma_2^2}{2t}\int^t_0 X^2(s)ds-\Big(\mu_2+\gamma+\frac{\sigma_1^2}{2}\Big)-\int_Z\eta(u)-\ln(1+\eta(u))\nu(du)\bigg)\\&\;\;\;+\beta \Phi_1(t)-\frac{\beta(\mu_2+\gamma)}{\mu_1 t}\int^t_0I(s)ds+\frac{\sigma_1}{t} W_1(t)\\&\;\;\;+\frac{\sigma_2}{t}\int^t_0 S(s)dW_2(s)+\frac{1}{t}\int_Z \ln(1+\eta(u))\widetilde{N}(ds,du).
\end{align*}
Hence, we further get
\begin{align*}
\frac{\beta(\mu_2+\gamma)}{\mu_1 t}\int^t_0I(s)ds&\geq \bigg(\frac{\beta A}{\mu_1}-\frac{\sigma_2^2}{2t}\int^t_0 X^2(s)ds-\Big(\mu_2+\gamma+\frac{\sigma_1^2}{2}\Big)-\int_Z\eta(u)-\ln(1+\eta(u))\nu(du)\bigg)\\&\;\;\;+\beta \Phi_1(t)-\frac{1}{t}(\ln I(t)-\ln I_0)+\frac{\sigma_1}{t} W_1(t)\\&\;\;\;+\frac{\sigma_2}{t}\int^t_0 S(s)dW_2(s)+\frac{1}{t}\int^t_0 \int_Z \ln(1+\eta(u))\widetilde{N}(ds,du).
\end{align*}
By the large number theorem for martingales and lemma \ref{lemmas}, we conclude that
\begin{align*}
\underset{t\to\infty }{\liminf}\frac{1}{ t}\int^t_0I(u)du&\geq \frac{\mu_1 }{\beta(\mu_2+\gamma)}\Bigg(\frac{\beta A}{\mu_1}-\frac{A^2\sigma_2^2}{\mu_1\chi_3}-\Big(\mu_2+\gamma+\frac{\sigma_1^2}{2}\Big)-\int_Z\eta(u)-\ln(1+\eta(u))\nu(du)\Bigg)\\&=\frac{\mu_1 }{\beta(\mu_2+\gamma)}\Big(\mu_2+\gamma+\frac{\sigma_1^2}{2}\Big)\Big(\mathcal{R}^s_0-1\Big)>0\hspace{0.2cm}\mbox{a.s.}
\end{align*}
This shows that the system (\ref{s2}) is persistent in the mean with probability one. In the following, based on the lemme 2.5, we will discuss the existence of a unique ergodic stationary distribution of the positive solutions to the system (\ref{s2}). Similar to the proof of lemma 3.2 in \cite{11}, we briefly verify the Feller property of the SDE model (\ref{s2}). The main purpose of the next analysis is to prove that (\ref{imp}) is impossible. Same as the above, we have
%
\begin{align*}
&d\Big\{\ln I(t)-\frac{\beta}{\mu_1}\Big(X(t)-S(t)\Big)\Big\}\\&=\bigg(\beta S(t)-(\mu_2+\gamma)-\frac{\sigma_1^2}{2}-\frac{\sigma_2^2}{2}S^2(t)-\int_Z \eta(u)-\ln(1+\eta(u))\nu(du)\bigg)dt\\&\;\;\;-\frac{\beta}{\mu_1}\Big(-\mu_1(X(t)-S(t))+\beta S(t)I(t)\Big)dt+\sigma_1dW_1(t)+\sigma_2 S(t)dW_2(t)\\&\;\;\;-\frac{\beta}{\mu_1}(X(t)-S(t))dW_1(t)-\frac{\beta\sigma_1}{\mu_1}S(t)I(t)dW_2(t)\\&\;\;\;-\frac{\beta }{\mu_1}\int_Z \eta(u)(X(t)-S(t))\tilde{\mathcal{N}}(dt,du)+\int_Z\ln(1+\eta(u))\tilde{\mathcal{N}}(dt,du).&
\end{align*}
Hence
\begin{align}
&d\Big\{\ln I(t)-\frac{\beta}{\mu_1}\big(X(t)-S(t)\big)\Big\}\\&=\bigg(\beta X(t)-(\mu_2+\gamma)-\frac{\sigma_1^2}{2}-\frac{\sigma_2^2}{2}S^2(t)-\int_Z \eta(u)-\ln(1+\eta(u))\nu(du)\bigg)dt\nonumber\\&\;\;\;-\frac{\beta^2 }{\mu_1}S(t)I(t)dt+\sigma_1dW_1(t)+\sigma_2 S(t)dW_2(t)-\frac{\beta}{\mu_1}(X(t)-S(t))dW_1(t)\nonumber\\&\;\;\;-\frac{\beta\sigma_1}{\mu_1}S(t)I(t)dW_2(t)-\frac{\beta }{\mu_1}\int_Z \eta(u)(X(t)-S(t))\tilde{\mathcal{N}}(dt,du)\nonumber\\&\;\;\;+\int_Z\ln(1+\eta(u))\tilde{\mathcal{N}}(dt,du).
\label{ito}
\end{align}
Integrating from $0$ to $t$ on both sides of (\ref{ito}) yields
\begin{align*}
&\ln\frac{I(t)}{I(0)}-\frac{\beta}{\mu_1}\Big(X(t)-S(t)\Big)+\frac{\beta}{\mu_1}\Big(X(0)-S(0)\Big)\\&=\int^t_0\bigg(\beta X(s)-\frac{\sigma_2^2}{2}S^2(s)-\Big(\mu_2+\gamma+\frac{\sigma_1^2}{2}\Big)-\int_Z \eta(u)-\ln(1+\eta(u))\nu(du)\bigg)ds\\&\;\;\;-\frac{\beta^2 }{\mu_1}\int^t_0S(s)I(s)ds+\sigma_1 W_1(t)+\sigma_2\int^t_0 S(s)dW_2(s)-\frac{\beta}{\mu_1}\int^t_0(X(s)-S(s))dW_1(s)\nonumber\\&\;\;\;-\frac{\beta\sigma_1}{\mu_1}\int^t_0S(s)I(s)dW_2(s)-\frac{\beta }{\mu_1}\int^t_0\int_Z \eta(u)(X(s^{-})-S(s^{-}))\tilde{\mathcal{N}}(ds,du)\nonumber\\&\;\;\;+\int^t_0\int_Z\ln(1+\eta(u))\tilde{\mathcal{N}}(ds,du).
\end{align*}
Then, we get
\begin{align}
\int^t_0\beta S(s)I(s)ds&=\frac{\mu_1}{\beta}\int^t_0\bigg(\beta X(s)-\frac{\sigma_2^2}{2}S^2(s)-\Big(\mu_2+\gamma+\frac{\sigma_1^2}{2}\Big)-\int_Z \eta(u)-\ln(1+\eta(u))\nu(du)\bigg)ds\nonumber\\&\;\;\;+\Big(X(t)-S(t)\Big)-\Big(X(0)-S(0)\Big)-\frac{\mu_1}{\beta}\ln\frac{I(t)}{I(0)}+\frac{\sigma_1\mu_1}{\beta} W_1(t)\nonumber\\&\;\;\;+\frac{\sigma_2\mu_1}{\beta}\int^t_0 S(s)dW_2(s)-\int^t_0(X(s)-S(s))dW_1(s)-\sigma_1\int^t_0S(s)I(s)dW_2(s)\nonumber\\&\;\;\;-\int^t_0\int_Z \eta(u)(X(s^{-})-S(s^{-}))\tilde{\mathcal{N}}(ds,du)+\frac{\mu_1}{\beta}\int^t_0\int_Z\ln(1+\eta(u))\tilde{\mathcal{N}}(ds,du).
\label{11}
\end{align}

Since $\underset{t\to +\infty}{\lim \sup}\frac{1}{t}\ln\frac{I(t)}{I(0)}\leq \underset{t\to +\infty}{\lim \sup}\frac{1}{t}\ln\frac{S(t)+I(t)}{I(0)}\leq 0$ a.s. and according to the large number theorem for martingales, one can derive that
\begin{align}
&\underset{t\to +\infty}{\lim \inf} \frac{1}{t}\int^t_0 \beta S(s)I(s)ds\nonumber\\&\geq \frac{\mu_1}{\beta}\underset{t\to +\infty}{\lim \inf} \frac{1}{t}\int^t_0\bigg(\beta X(s)-\frac{\sigma_2^2}{2}X^2(s)-\Big(\mu_2+\gamma+\frac{\sigma_1^2}{2}\Big)-\int_Z \eta(u)-\ln(1+\eta(u))\nu(du)\bigg)du\nonumber\\
&=\frac{\mu_1}{\beta}\underset{t\to +\infty}{\lim } \frac{1}{t}\int^t_0\bigg(\beta X(s)-\frac{\sigma_2^2}{2}X^2(s)-\Big(\mu_2+\gamma+\frac{\sigma_1^2}{2}\Big)-\int_Z \eta(u)-\ln(1+\eta(u))\nu(du)\bigg)du.
\label{from}
\end{align}
Now from lemma \ref{lemmas}, it follows that
\begin{align}
\underset{t\to +\infty}{\lim \inf} \frac{1}{t}\int^t_0 \beta S(s)I(s)ds\nonumber&\geq \frac{\mu_1}{\beta}\times\Bigg(\frac{\beta A}{\mu_1}-\frac{A^2\sigma_2^2}{\mu_1\chi_3}-\Big(\mu_2+\gamma+\frac{\sigma_1^2}{2}\Big)-\int_Z \eta(u)-\ln(1+\eta(u))\nu(du)\Bigg)\nonumber\\&=\frac{\mu_1}{\beta}\Big(\mu_2+\gamma+\frac{\sigma_1^2}{2}\Big)\Big(\mathcal{R}_0^s-1\Big)>0\hspace{0.5cm}\mbox{a.s.}
\label{ergo1}
\end{align}
To continue our analysis, we need to set the following subsets: $\Omega_1=\{(S,I,R)\in\R^3_+|\hspace{0.1cm}S\geq \epsilon,\hspace{0.1cm}\mbox{and},\hspace{0.1cm} I\geq \epsilon\}$, $\Omega_2=\{(S,I,R)\in\R^3_+|\hspace{0.1cm}S\leq \epsilon\}$, and $\Omega_3=\{(S,I,R)\in\R^3_+|\hspace{0.1cm}I\leq \epsilon\}$ where $\epsilon>0$ is a positive constant to be determined later. Therefore, by (\ref{ergo1}), we get
\begin{align*}
\underset{t\to +\infty}{\lim \inf}\frac{1}{t}\int^t_0\E\Big( \beta S(s)I(s)\mathbf{1}_{\Omega_1}\Big)ds&\geq \underset{t\to +\infty}{\lim \inf}\frac{1}{t}\int^t_0\E\Big( \beta S(s)I(s)\Big)ds-\underset{t\to +\infty}{\lim \sup}\frac{1}{t}\int^t_0\E\Big( \beta S(s)I(s)\mathbf{1}_{\Omega_2}\Big)ds\\&\;\;\;-\underset{t\to +\infty}{\lim \sup}\frac{1}{t}\int^t_0\E\Big( \beta S(s)I(s)\mathbf{1}_{\Omega_3}\Big)ds\\&\geq \frac{\mu_1}{\beta}\Big(\mu_2+\gamma+\frac{\sigma_1^2}{2}\Big)\Big(\mathcal{R}_0^s-1\Big)-\beta \epsilon \underset{t\to +\infty}{\lim \sup}\frac{1}{t}\int^t_0\E\Big(I(s)\Big)ds\\&\;\;\;-\beta \epsilon \underset{t\to +\infty}{\lim \sup}\frac{1}{t}\int^t_0\E\Big(S(s)\Big)ds.
\end{align*}
By lemma \ref{L1}, one can see that
\begin{align*}
\underset{t\to +\infty}{\lim \inf}\frac{1}{t}\int^t_0\E\Big( \beta S(s)I(s)\mathbf{1}_{\Omega_1}\Big)ds&\geq \frac{\mu_1}{\beta}\Big(\mu_2+\gamma+\frac{\sigma_1^2}{2}\Big)\Big(\mathcal{R}_0^s-1\Big)- \frac{2A\beta \epsilon}{\mu_1-\ell}.
\end{align*}
We can choose $\epsilon \leq \frac{\mu_1(\mu_1-\ell)}{4\beta^2A}\Big(\mu_2+\gamma+\frac{\sigma_1^2}{2}\Big)\Big(\mathcal{R}_0^s-1\Big)$, and then we obtain
\begin{align}
\underset{t\to +\infty}{\lim \inf}\frac{1}{t}\int^t_0\E\Big( \beta S(s)I(s)\mathbf{1}_{\Omega_1}\Big)ds&\geq \frac{\mu_1}{2\beta}\Big(\mu_2+\gamma+\frac{\sigma_1^2}{2}\Big)\Big(\mathcal{R}_0^s-1\Big)>0.
\label{22}
\end{align}
Let $q=a_0>1$ be a positive integer and $1<p=\frac{a_0}{a_0-1}$ such that $\chi_2> 0$ and $\frac{1}{q}+\frac{1}{p}=1$. By utilizing the Young inequality $xy\leq \frac{x^p}{p}+\frac{y^q}{q}$ for all $x$,$y>0$, we get
\begin{align*}
\underset{t\to +\infty}{\lim \inf}\frac{1}{t}\int^t_0\E\Big( \beta S(s)I(s)\mathbf{1}_{\Omega_1}\Big)ds&\leq \underset{t\to +\infty}{\lim \inf}\frac{1}{t}\int^t_0\E\bigg(p^{-1}(\eta \beta S(s)I(s))^p+q^{-1}\eta^{-q}\mathbf{1}_{\Omega_1}\bigg)ds\\&\leq \underset{t\to +\infty}{\lim \inf}\frac{1}{t}\int^t_0 \E\Big(q^{-1}\eta^{-q}\mathbf{1}_{\Omega_1}\Big)ds+p^{-1}(\eta \beta)^p\underset{t\to +\infty}{\lim \sup}\frac{1}{t}\int^t_0\E\Big(N^{2p}(s)\Big)ds,
\end{align*}
where $\eta$ is a positive constant satisfying
\begin{align*}
\eta^p\leq\frac{p\mu_1 \chi_2\beta^{-(p+1)}}{8 \chi_1}\Big(\mu_2+\gamma+\frac{\sigma_1^2}{2}\Big)\Big(\mathcal{R}_0^s-1\Big).
\end{align*}
By lemma \ref{L1} and (\ref{22}), we deduce that
\begin{align}
\underset{t\to +\infty}{\lim \inf}\frac{1}{t}\int^t_0\E\big(\mathbf{1}_{\Omega_1}\big)ds&\geq q\eta^q\Bigg(\frac{\mu_1}{2\beta}\Big(\mu_2+\gamma+\frac{\sigma_1^2}{2}\Big)\Big(\mathcal{R}_0^s-1\Big)-\frac{2\chi_1\eta^p\beta^p}{p\chi_2}\Bigg)\nonumber\\&\geq \frac{\mu_1 q\eta^q}{4\beta}\Big(\mu_2+\gamma+\frac{\sigma_1^2}{2}\Big)\Big(\mathcal{R}_0^s-1\Big)>0.
\label{23}
\end{align}
Setting $\Omega_4=\{(S,I,R)\in\R^3_+|\hspace{0.1cm}S\geq \zeta,\hspace{0.1cm}\mbox{or},\hspace{0.1cm} I\geq \zeta\}$ and $\Sigma=\{(S,I,R)\in\R^3_+|\hspace{0.1cm}\epsilon \leq S\leq \zeta,\hspace{0.1cm}\mbox{and},\hspace{0.1cm} \epsilon \leq I\leq \zeta\}$ where $\zeta>0$ is a positive constant to be explained in the following. By using the Tchebychev inequality, we can observe that
\begin{align*}
\E[\mathbf{1}_{\Omega_4}]\leq \P(S(t)\geq \zeta )+\P(I(t)\geq \zeta )&\leq\frac{1}{\zeta}\E[S(t)+I(t)]\leq\frac{1}{\zeta}\bigg(\frac{2A}{\mu_1-\ell}+N(0)\bigg).
\end{align*}
Choosing $\frac{1}{\zeta}\leq \frac{\mu_1 q\eta^q}{8\beta}\Big(\mu_2+\gamma+\frac{\sigma_1^2}{2}\Big)\Big(\mathcal{R}_0^s-1\Big)\Big(\frac{2A}{\mu_1-\ell}+N(0)\Big)^{-1}$. We thus obtain
\begin{align*} 
\underset{t\to +\infty}{\lim \sup}\frac{1}{t}\int^t_0\E[\mathbf{1}_{\Omega_4}]ds&\leq \frac{\mu_1 q\eta^q}{8\beta}\Big(\mu_2+\gamma+\frac{\sigma_1^2}{2}\Big)\Big(\mathcal{R}_0^s-1\Big).
\end{align*}
According to (\ref{23}), one can derive that
\begin{align*}
\underset{t\to +\infty}{\lim \inf}\frac{1}{t}\int^t_0\E[\mathbf{1}_{\Sigma}]ds&\geq \underset{t\to +\infty}{\lim \inf}\frac{1}{t}\int^t_0\E[\mathbf{1}_{\Omega_1}]ds-\underset{t\to +\infty}{\lim \sup}\frac{1}{t}\int^t_0\E[\mathbf{1}_{\Omega_4}]ds\\&\;\;\; \geq\frac{\mu_1 q\eta^q}{8\beta}\Big(\mu_2+\gamma+\frac{\sigma_1^2}{2}\Big)\Big(\mathcal{R}_0^s-1\Big)>0.
\end{align*}
Based on the above analysis, 
we have determined a compact domain $\Sigma\subset \R^3_+$ such that
\begin{align*}
\underset{t\to +\infty}{\lim \inf}\frac{1}{t}\int^t_0\P\Big(s,(S(0),I(0),R(0)),\Sigma\Big)ds\geq \frac{\mu_1 q\eta^q}{8\beta}\Big(\mu_2+\gamma+\frac{\sigma_1^2}{2}\Big)\Big(\mathcal{R}_0^s-1\Big)>0.
\end{align*}
Applying similar arguments to those in \citep{12}, we show the uniqueness of the ergodic stationary distribution of our model (\ref{s2}). This completes the proof.

\end{proof}
Now, we will give the result on the extinction of the disease. Define
\begin{align*}
\mathcal{\hat{R}}^s_0=\Big(\mu_2+\gamma+\frac{\sigma_1^2}{2}\Big)^{-1}\bigg(\frac{\beta A}{\mu_1}-\frac{\sigma_2^2A^2}{2\mu_1^2}-\int_Z\eta(u)-\ln(1+\eta(u))\nu(du)\bigg).
\end{align*}

\begin{thm}
Let $(S(t),I(t),R(t))$ be the solution of system (\ref{s2}) with initial value $(S(0), I(0), R(0))\in\R^3_+$. If 
\begin{align}
&\mathcal{\hat{R}}_0^s<1\hspace{0.2cm}\mbox{and} \hspace{0.2cm} \sigma_2^2\leq \frac{\mu_1\beta}{A},&
\label{cond1}
\end{align}
or
\begin{align}
&\frac{\beta^2}{2\sigma_2^2}-\Big(\mu_2+\gamma+\frac{\sigma^2_1}{2}\Big)-\int_Z\eta(u)-\ln(1+\eta(u))\nu(du)<0.&
\label{cond2}
\end{align}
Then, the disease dies out exponentially with probability one. That is to say,
\begin{align}
\underset{t\to \infty}{\lim \sup}\frac{\ln I(t)}{t}<0\hspace*{0.3cm}\mbox{a.s.}
\label{ext}
\end{align}
\label{thm2}
\end{thm}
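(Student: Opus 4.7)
The plan is to begin from the Itô expansion of $\ln I(t)$ already derived in (\ref{205}), integrate from $0$ to $t$, divide by $t$, and send $t\to\infty$, showing in each of the two hypothesized regimes that the resulting limit superior is strictly negative almost surely.

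A key preparatory step is the pathwise comparison $S(t)\leq X(t)$. Summing the three equations of (\ref{s2}), the total size $N(t)=S(t)+I(t)+R(t)$ satisfies $dN\leq (A-\mu_1 N)\,dt+\sigma_1 N\,dW_1(t)+\int_Z\eta(u)N(t^-)\widetilde N(dt,du)$, so by the stochastic comparison theorem $S(t)\leq N(t)\leq X(t)$ a.s., and by lemma \ref{lemmas} one has $\limsup_{t\to\infty}\frac{1}{t}\int_0^t S(s)\,ds\leq\frac{A}{\mu_1}$. The strong law for local martingales disposes of every noise term produced by integrating (\ref{205}): the term $\sigma_1 W_1(t)/t$ tends to zero by the classical LIL; the quadratic variation of $\int_0^t\sigma_2 S(s)\,dW_2(s)$ is dominated by $\sigma_2^2\int_0^t X^2\,ds$, which grows linearly by lemma \ref{lemmas}, so its ratio to $t$ vanishes; and the compensated Poisson integral $\int_0^t\int_Z\ln(1+\eta(u))\widetilde N(ds,du)$ vanishes after dividing by $t$ thanks to assumption $(A_3)$.

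The heart of the argument is to bound $\frac{1}{t}\int_0^t\bigl(\beta S(s)-\frac{\sigma_2^2}{2}S^2(s)\bigr)\,ds$ from above. The function $g(x)=\beta x-\sigma_2^2 x^2/2$ is concave with global maximum $\beta^2/(2\sigma_2^2)$. Under hypothesis (\ref{cond2}), bounding $g(S)\leq\beta^2/(2\sigma_2^2)$ immediately yields
\[
\limsup_{t\to\infty}\frac{\ln I(t)}{t}\leq\frac{\beta^2}{2\sigma_2^2}-\Big(\mu_2+\gamma+\frac{\sigma_1^2}{2}\Big)-\int_Z\bigl(\eta(u)-\ln(1+\eta(u))\bigr)\nu(du)<0.
\]
Under hypothesis (\ref{cond1}) this crude bound is too weak; instead I would use the tangent-line majorant of $g$ at $x_0=A/\mu_1$,
\[
g(x)\leq g\Big(\frac{A}{\mu_1}\Big)+\Big(\beta-\frac{\sigma_2^2 A}{\mu_1}\Big)\Big(x-\frac{A}{\mu_1}\Big),
\]
observing that the condition $\sigma_2^2\leq\mu_1\beta/A$ is precisely what keeps the slope $\beta-\sigma_2^2 A/\mu_1$ nonnegative. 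That sign allows one to safely replace $\frac{1}{t}\int_0^t S\,ds$ by its $X$-upper bound without flipping the inequality; passing to $t\to\infty$ gives $\limsup\frac{1}{t}\int_0^t g(S(s))\,ds\leq g(A/\mu_1)=\beta A/\mu_1-\sigma_2^2 A^2/(2\mu_1^2)$, and plugging back into (\ref{205}) yields $\limsup\ln I(t)/t\leq(\mu_2+\gamma+\sigma_1^2/2)(\mathcal{\hat R}_0^s-1)<0$.

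The main obstacle is that the quadratic term $-\sigma_2^2 S^2/2$ in $d\ln I$ has the wrong sign for a naive upper bound via the pathwise comparison $S\leq X$, since $x\mapsto -x^2$ is decreasing for $x>0$. Overcoming it relies on the concavity trick above: linearizing $g$ at the mean-field equilibrium $A/\mu_1$ reduces the question to a first-order estimate on $\int S$, for which the comparison with $X$ is exactly what is needed, provided the hypothesis $\sigma_2^2\leq\mu_1\beta/A$ guarantees the correct sign of the remainder. Alternative (\ref{cond2}) simply bypasses this difficulty by trading tightness for robustness through the global bound $g\leq\beta^2/(2\sigma_2^2)$.
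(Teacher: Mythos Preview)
Your proof is correct and follows the same overall strategy as the paper: apply It\^o to $\ln I$, control the concave drift $g(S)=\beta S-\tfrac{\sigma_2^2}{2}S^2$ via its global maximum $\beta^2/(2\sigma_2^2)$ under (\ref{cond2}) or via a linearization at $A/\mu_1$ whose slope is kept nonnegative by $\sigma_2^2\leq\mu_1\beta/A$ under (\ref{cond1}), and dispose of the noise terms by the strong law for martingales. The only technical difference is that, for case (\ref{cond1}), the paper uses Jensen's inequality $\tfrac{1}{t}\int_0^t S^2\,ds\geq\bigl(\tfrac{1}{t}\int_0^t S\,ds\bigr)^2$ together with the exact identity (\ref{S(t)}) for $\tfrac{1}{t}\int_0^t S\,ds$, rather than your tangent-line majorant combined with the comparison $S\leq X$; the hypothesis $\sigma_2^2\leq\mu_1\beta/A$ plays the identical role in both variants.
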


\begin{proof}
By Itô’s formula for all $t \geq0$, we have
\begin{align}
d\ln I(t)&=\Big(\beta S(t)-\frac{\sigma_2^2}{2}S^2(t)-\Big(\mu_2+\gamma+\frac{\sigma^2_1}{2}\Big)-\int_Z\eta(u)-\ln(1+\eta(u))\nu(du)\Big)dt\nonumber\\&\;\;\;+\sigma_1 dW_1(t)+\sigma_2S(t)dW_2(t)+\int_Z \ln(1+\eta(u))\widetilde{N}(dt,du).
\label{25}
\end{align}
Integrating (\ref{25}) from $0$ to $t$ and then dividing $t$ on both sides, we get
\begin{align}
\frac{\ln I(t)}{t}&= \frac{\beta}{t}\int^t_0S(s)ds-\frac{\sigma_2^2}{2t}\int^t_0S^2(s)ds-\Big(\mu_2+\gamma+\frac{\sigma^2_1}{2}\Big)\\&\;\;\;-\int_Z\eta(u)-\ln(1+\eta(u))\nu(du)+\Phi_2(t),
\label{28}
\end{align}
where
\begin{align*}
\Phi_2(t)=\frac{\ln I(0)}{t}+\frac{\sigma_1}{t} W_1(t)+\frac{\sigma_2}{t}\int^t_0 S(s)dW_2(s)+\frac{1}{t}\int_0^t\int_Z \ln(1+\eta(u))\widetilde{N}(ds,du).
\end{align*}
Obviously, we know that
\begin{align*}
\frac{1}{t}\int^t_0S^2(s)ds\geq \Big(\frac{1}{t}\int^t_0S(s)ds\Big)^2.
\end{align*}
Therefore we derive
\begin{align*}
\frac{\ln I(t)}{t}&\leq \frac{\beta}{t}\int^t_0S(s)ds-\frac{\sigma_2^2}{2}\Big(\frac{1}{t}\int^t_0S(s)ds\Big)^2-\Big(\mu_2+\gamma+\frac{\sigma^2_1}{2}\Big)-\int_Z\eta(u)-\ln(1+\eta(u))\nu(du)+\Phi_2(t)\\&\leq \beta \bigg(\frac{A}{\mu_1}-\frac{(\mu_2+\gamma)}{\mu_1 t}\int^t_0I(s)ds+\Phi_1(t)\bigg)-\frac{\sigma_1^2}{2}\bigg(\frac{A}{\mu_1}-\frac{(\mu_2+\gamma)}{\mu_1 t}\int^t_0I(s)ds+\Phi_1(t)\bigg)^2\\&\;\;\; -\Big(\mu_2+\gamma+\frac{\sigma^2_1}{2}\Big)-\int_Z\eta(u)-\ln(1+\eta(u))\nu(du)+\Phi_2(t).
\end{align*}
Hence one can see that
\begin{align}
\frac{\ln I(t)}{t}&\leq\frac{\beta A}{\mu_1}-\frac{A^2\sigma_1^2}{2\mu_1^2}-\Big(\mu_2+\gamma+\frac{\sigma^2_1}{2}\Big)-\int_Z\eta(u)-\ln(1+\eta(u))\nu(du)\nonumber\\&\;\;\;-\frac{(\mu_2+\gamma)}{\mu_1}\bigg(\beta-\frac{A\sigma_1^2}{\mu_1}\bigg)\frac{1}{t}\int^t_0I(s)ds
-\frac{\sigma_1^2}{2}\bigg(\frac{(\mu_2+\gamma)}{\mu_1 t}\int^t_0I(u)du\bigg)^2\nonumber\\&\;\;\;+\Phi_2(t)+\Phi_3(t),
\label{32}
\end{align}
where
\begin{align*}
\Phi_3(t)=\beta\Phi_1(t)-\frac{\sigma^2_1}{2}\Phi^2_1(t)-\frac{\sigma_1^2A\Phi_1(t)}{\mu_1}+\sigma_1^2\Phi_1(t)\frac{(\mu_2+\gamma)}{\mu_1 t}\int^t_0I(s)ds.
\end{align*}
An application of large number theorem for martingales, one has
\begin{align*}
\underset{t\to \infty}{\lim }\frac{\Phi_2(t)}{t}=\underset{t\to \infty}{\lim }\frac{\Phi_3(t)}{t}=0\hspace*{0.3cm}\mbox{a.s.}
\end{align*}
Taking the superior limit on both sides of (\ref{32}), then by condition (\ref{cond1}), we arrive at
\begin{align*}
\underset{t\to \infty}{\lim \sup}\frac{\ln I(t)}{t}&\leq \Big(\mu_2+\gamma+\frac{\sigma_2}{2}\Big)\Big(\mathcal{\hat{R}}^s_0-1\Big)<0\hspace*{0.3cm}\mbox{a.s.}
\end{align*}
If the condition (\ref{cond2}) is satisfied, then
\begin{align*}
\frac{\ln I(t)}{t}&=\frac{\beta}{t}\int^t_0S(s)ds-\frac{\sigma_2^2}{2t}\int^t_0S^2(s)ds-\Big(\mu_2+\gamma+\frac{\sigma^2_1}{2}\Big)-\int_Z\eta(u)-\ln(1+\eta(u))\nu(du)\\&\;\;\;+\frac{\ln I(0)}{t}+\frac{\sigma_1}{t} W_1(t)+\frac{\sigma_2}{t}\int^t_0 S(s)dW_2(s)+\frac{1}{t}\int_0^t\int_Z \ln(1+\eta(u))\widetilde{N}(ds,du)\\&=\frac{\beta^2}{2\sigma_2^2}-\Big(\mu_2+\gamma+\frac{\sigma^2_1}{2}\Big)-\int_Z\eta(u)-\ln(1+\eta(u))\nu(du)-\frac{1}{t}\int^t_0\bigg(\frac{\sigma_2^2}{2}\bigg(S(s)-\frac{\beta}{\sigma_2^2}\bigg)^2\bigg)ds\\&\;\;\;+\frac{\ln I(0)}{t}+\frac{\sigma_1}{t} W_1(t)+\frac{\sigma_2}{t}\int^t_0 S(s)dW_2(s)+\frac{1}{t}\int_0^t\int_Z \ln(1+\eta(u))\widetilde{N}(ds,du)\\&\leq 
\frac{\beta^2}{2\sigma_2^2}-\Big(\mu_2+\gamma+\frac{\sigma^2_1}{2}\Big)-\int_Z\eta(u)-\ln(1+\eta(u))\nu(du)+\frac{\ln I(0)}{t}+\frac{\sigma_1}{t} W_1(t)\\&\;\;\;+\frac{\sigma_2}{t}\int^t_0 S(s)dW_2(s)+\frac{1}{t}\int_0^t\int_Z \ln(1+\eta(u))\widetilde{N}(ds,du).
\end{align*}
By the large number theorem for martingales and the condition (\ref{cond2}), our desired result (\ref{ext}) holds true. This completes the proof.

\end{proof}
\section{Examples}
In this section, we will validate our theoretical results with the help of numerical simulation examples taking parameters from the theoretical data mentioned in the table \ref{value}. We numerically simulate the solution to system (\ref{s2}) with initial value $(S(0), I(0), R(0)) = (0.4, 0.3, 0.1)$. For the purpose of showing the effects of the perturbations on the disease dynamics, we have realized the simulation $15000$ times. 
\begin{center}
\begin{tabular}{llll}
\hline
Parameters  \hspace*{0.5cm}&Description  \hspace*{0.5cm}&Value  \\
\hline
$A$ & The recruitment rate & 0.09 \\
$\mu_1$ & The natural mortality rate & 0.05\\
$\beta$ & The transmission rate &0.06 \\
$\gamma$ & The recovered rate &0.01 \\
$\mu_2$ &The general mortality &0.09 \\
\hline
\end{tabular}
\captionof{table}{Some theoretical parameter values of the model (\ref{s2}).}
\label{value}
\end{center}
\begin{figure}[!htb]
\subcaptionbox{The left figure is the stationary distribution for S(t), the right picture is the stationary distribution I(t).}
{\includegraphics[width=3.2in]{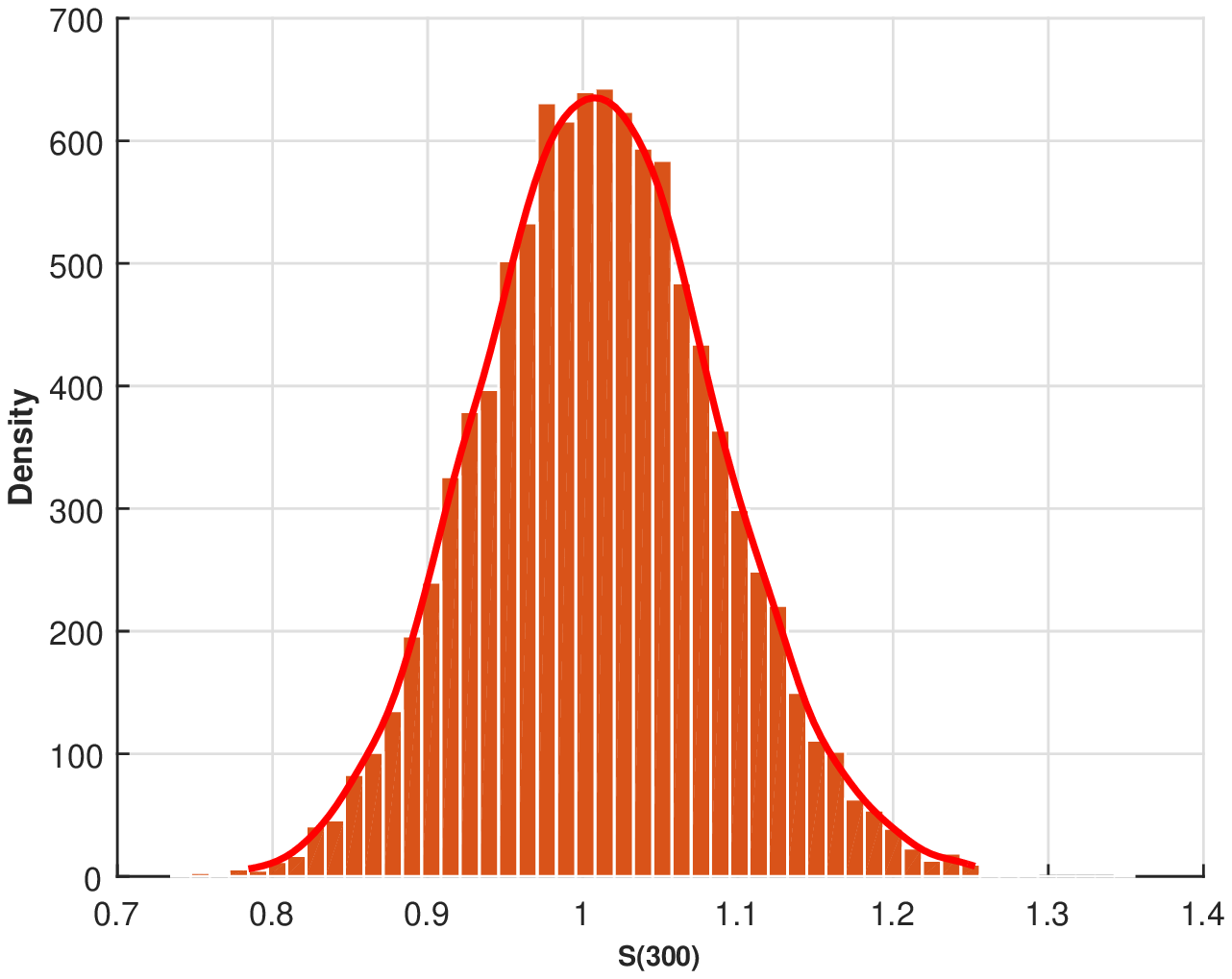} 
\includegraphics[width=3.2in]{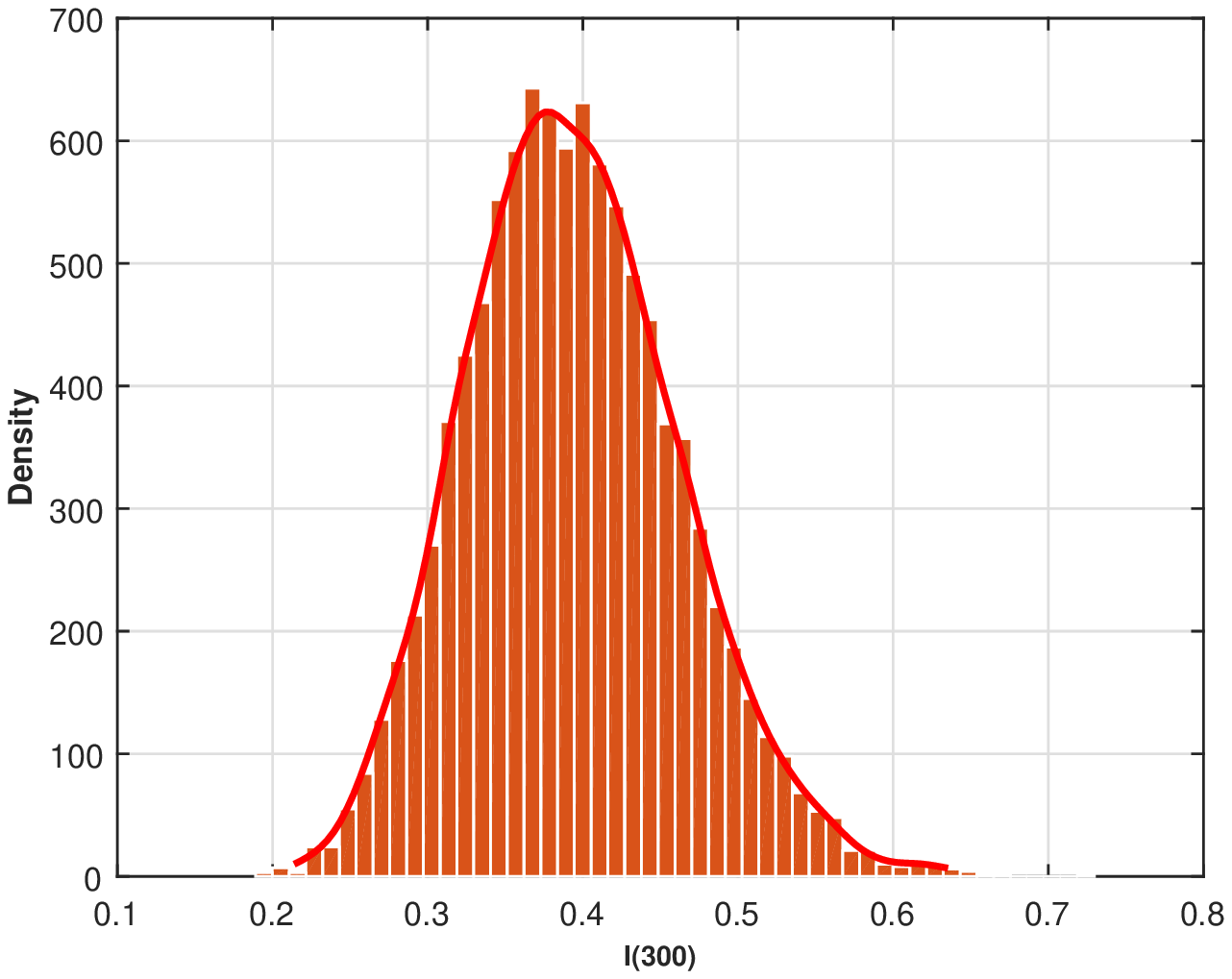}} 
\subcaptionbox{The left figure is the stationary distribution for R(t), the right picture is the trajectory of the solution.}
{\includegraphics[width=3.2in]{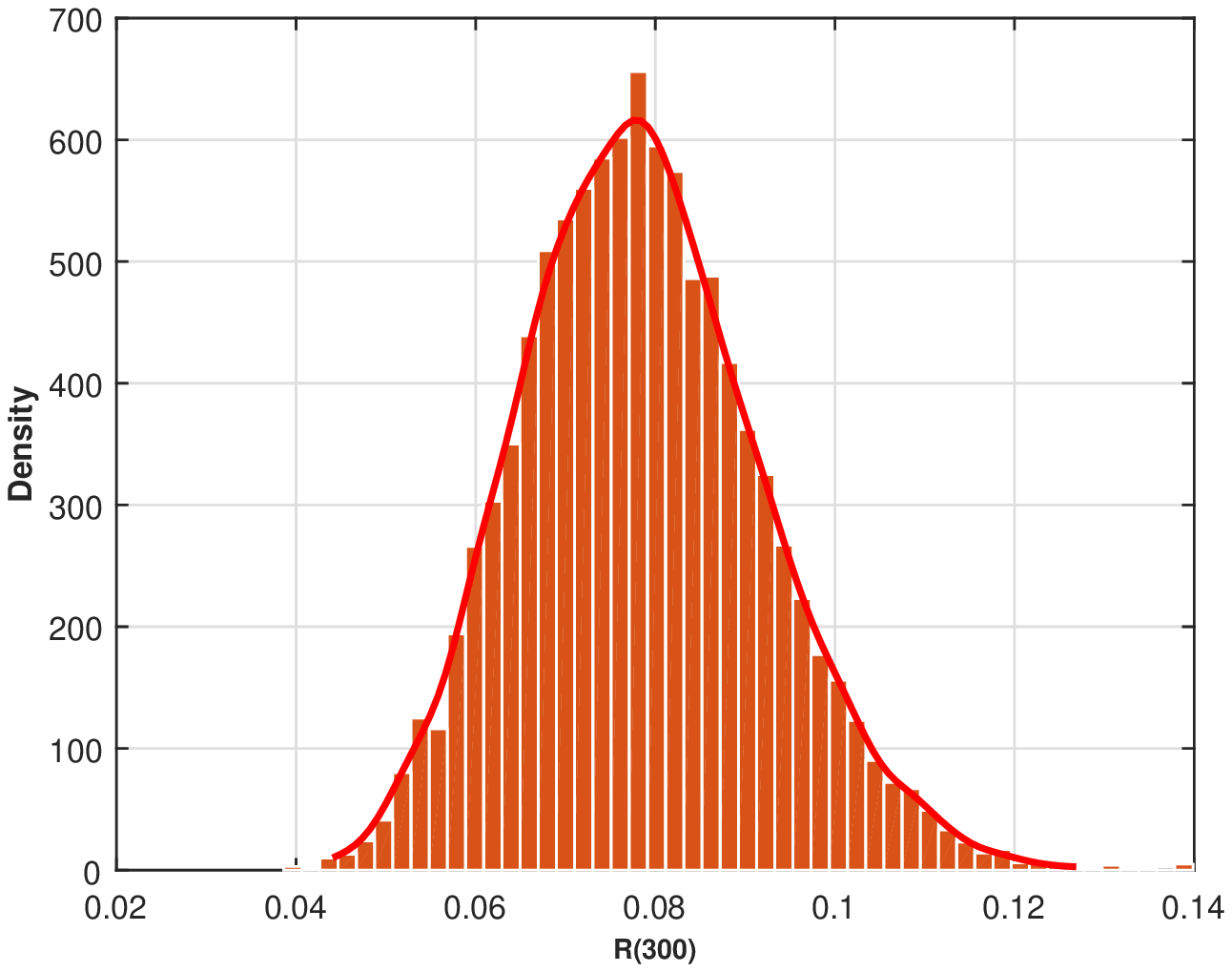} 
\includegraphics[width=3.2in]{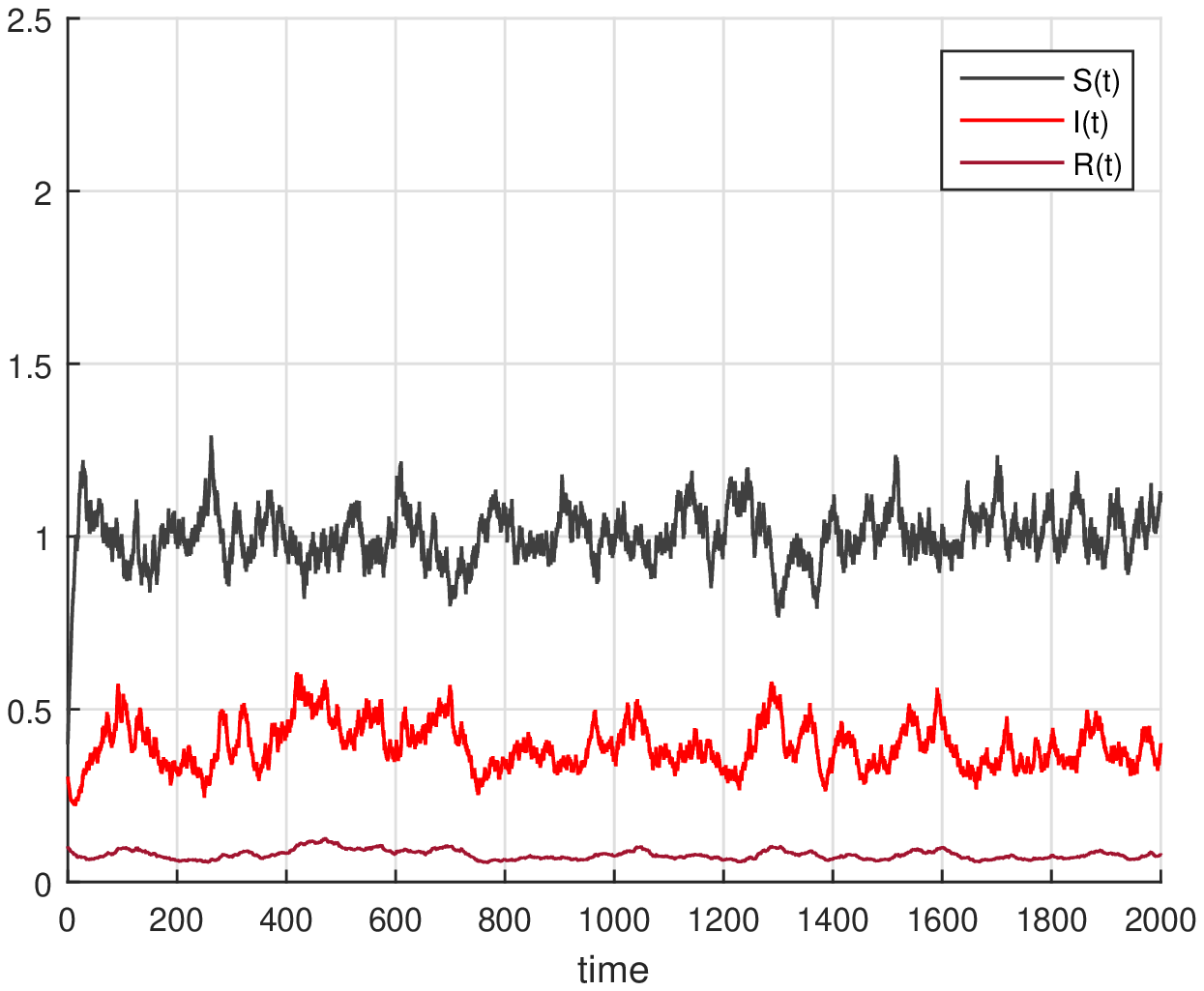} } 
\caption{The numerical illustration of obtained results in the theorem \ref{thm1}.}
\label{fig1}
\end{figure}
\begin{ex}
We have chosen the stochastic fluctuations intensities $\sigma_1=0.03$ and $\sigma_2=0.02$. Furthermore, we assume that $\eta(u)=0.05$, $Z=(0,\infty)$ and $\nu(Z)=1$. Then, $R_0^s=1.672>1$. From figure \ref{fig1}, we show the existence of the unique stationary distributions for $S(t)$, $I(t)$ and $R(t)$ of model (\ref{s2}) at $t = 300$, where the smooth curves are the probability density functions of $S(t)$, $I(t)$ and $R(t)$, respectively. It can be obviously observed that the solution of the stochastic model (\ref{s2}) persists in the mean. 
\end{ex}
\begin{figure*}[!htb]
\begin{center}$
\begin{array}{cc}
\includegraphics[width=5.5in]{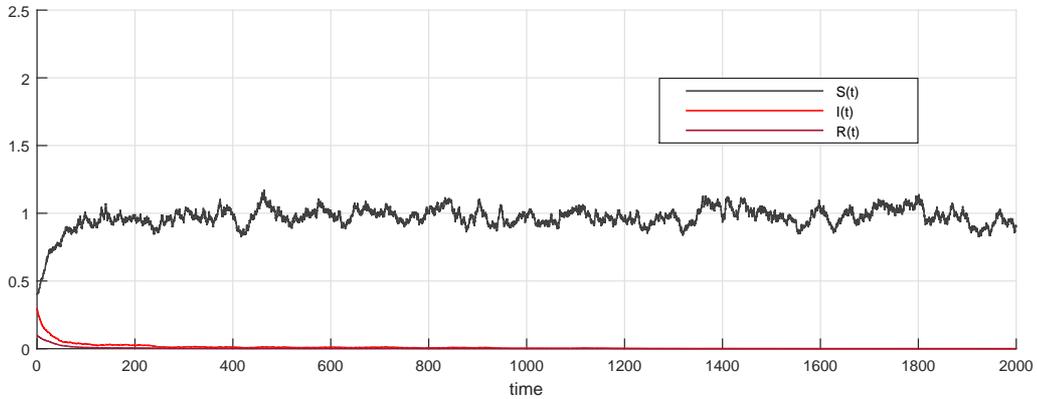}  
\end{array}$
\end{center}
\caption{The numerical simulation of the solution $(S(t),I(t),R(t))$ in system (\ref{s2}).}
\label{fig2}
\end{figure*}
\begin{ex}
Now, we choose the white noise intensities $\sigma_1=0.2$ and $\sigma_2=0.3$ to ensure that the condition (\ref{cond2}) of theorem (\ref{thm2}) is satisfied. We can conclude that for any initial value, $I(t)$ obeys
\begin{align*}
\underset{t\to \infty}{\lim \sup}\frac{1}{t}\ln\frac{I(t)}{I(0)}&\leq \frac{\beta^2}{2 \sigma_2^2}-\Big(\mu_2+\gamma+\frac{\sigma_1^2}{2}\Big)-\int_Z\eta(u)-\ln(1+\eta(u))\nu(du)=-0.065<0 \;\;\; \mbox{a.s.}
\end{align*}
That is, $I(t)$ will tend to zero exponentially with probability one (see figure \ref{fig2}). To verify that the condition (\ref{cond1}) is satisfied, we change $\sigma_1$ to $0.01$, $\sigma_2$ to $0.02$, $\mu_2$ to $0.43$ and $\beta$ to $0.145$ and keep other parameters unchanged. Then we have
\begin{align*}
\mathcal{\hat{R}}^s_0=\Big(\mu_2+\gamma+\frac{\sigma_1^2}{2}\Big)^{-1}\bigg(\frac{\beta A}{\mu_1}-\frac{\sigma_2^2A^2}{2\mu_1^2}-\int_Z\eta(u)-\ln(1+\eta(u))\nu(du)\bigg)= 0.9860<1,
\end{align*}
and
\begin{align*}
\sigma_1^2-\frac{\mu_1\beta}{A}=-0.0804<0.
\end{align*}
Therefore, the condition (\ref{cond1}) of theorem \ref{thm1} is satisfied. We can conclude that for any initial value, $I(t)$ obeys
\begin{align*}
\underset{t\to \infty}{\lim \sup}\frac{1}{t}\ln\frac{I(t)}{I(0)}&\leq (\mathcal{\hat{R}}^s_0-1)\Big(\mu_2+\gamma+\frac{\sigma_2^2}{2}\Big)=-0.0061<0 \;\;\; \mbox{a.s.}
\end{align*}
That is, $I(t)$ will tend to zero exponentially with probability one (see figure \ref{fig3}).
\end{ex}
\begin{figure*}[!htb]
$
\begin{array}{cc}
\includegraphics[width=5.5in]{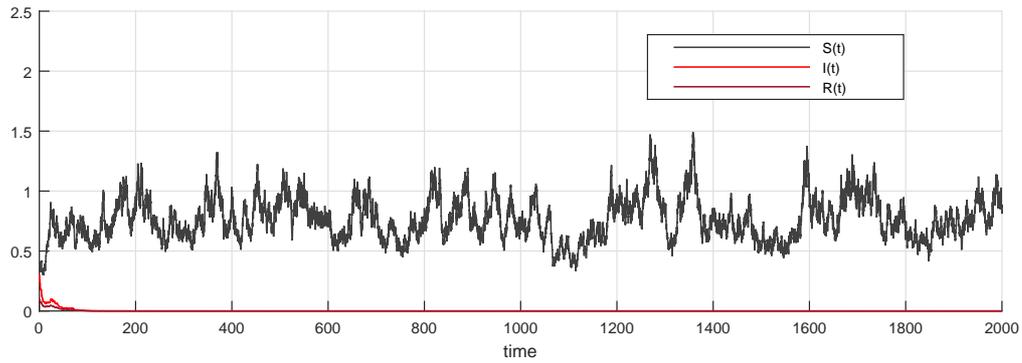}  
\end{array}$
\caption{The numerical simulation of the solution $(S(t),I(t),R(t))$ in system (\ref{s2}).}
\label{fig3}
\end{figure*}
\section{Conclusion}
The dissemination of the epidemic diseases presents
a global issue that concerns decision-makers to elude deaths and deterioration of economies.  Many scientists are motivated to understand and suggest the ways for diminishing the epidemic dissemination. The first generation proposed the deterministic models that showed a lack of realism due to the neglecting of environmental perturbations.  Recent studies present a deep understanding of the process of outbreak diseases by taking into account their random aspect. This contribution is the first work that combines two different disturbances: white and Lévy noises. This original idea generalizes the existing works. Our work based on the following new techniques:
\begin{enumerate}
\item The calculation of the temporary average of the solution of the auxiliary equation (\ref{s4}) instead of the classic method based on the explicit form of the stationary distribution in the model (\ref{s4}).
\item The investigation of the disease persistence with a new approach based on the stochastic comparison theorem.
\item The use of Feller property and mutually exclusive possibilities lemma for proving the ergodicity of the model (\ref{s2}).
\end{enumerate}
Based on the above techniques, our analysis leads to three main results:
\begin{enumerate}
\item In theorem \ref{thm1}, we proved that the persistence in the mean of the disease occurs under the same condition of the existence of a unique ergodic stationary distribution.
\item In theorem \ref{thm2}, we showed that the extinction of the disease in the stochastic system (\ref{s2}) occurs if
one of the conditions (\ref{cond1}) and (\ref{cond2}) holds. It should be noted that these conditions are sufficient for
the extinction of the epidemic.
\end{enumerate}
Comparing our stochastic model with corresponding previous researches, our theoretical analysis leads to establishing a new appropriate condition for the persistence and the existence of ergodic stationary distribution in the model (\ref{s2}). However, our paper brings more challenges to propose an improved method to obtain the global threshold between the existence of the unique ergodic  stationary distribution (persistence) and the extinction of
a disease. We seek in our future works to treat this interesting problem.

\section*{References}
\bibliographystyle{ieeetr}
\bibliography{double}
\end{document}